\newtheorem*{conj*}{Conjecture}
\newtheorem*{cor*}{Corollary}
\newtheorem{theorem}{Theorem}[section]
\newtheorem{proposition}[theorem]{Proposition}
\newtheorem{corollary}[theorem]{Corollary}
\newtheorem{lemma}[theorem]{Lemma}
\theoremstyle{definition}
\newtheorem*{def*}{Definition}
\newtheorem{example}[theorem]{Example}
\newtheorem{definition}[theorem]{Definition}
\newcommand{\SC}{{\mathcal C}}
\newcommand{\SM}{{\mathcal M}}
\newcommand{\al} {\alpha}       
    \newcommand{\Ga}{\Gamma}
\newcommand{\de} {\delta}
\newcommand{\Z}{\mathbb{Z}}
\newcommand{\N}{\mathbb{N}}
\newcommand{\R}{\mathbb{R}}
\newcommand{\tpitchfork}{
  \vbox{
    \baselineskip\z@skip
    \lineskip-.52ex
    \lineskiplimit\maxdimen
    \m@th
    \ialign{##\crcr\hidewidth\smash{$-$}\hidewidth\crcr$\pitchfork$\crcr}
  }
}
\thanks{2010 {\it Mathematics Subject Classification}.  37A35, 37C10, 37C40}
 \keywords{Rescaled expansive measures; Flows with singularities}
\theoremstyle{plain}
\newtheorem{Thm}{Theorem}[section]
\newtheorem{Lem}[Thm]{Lemma}
\newtheorem{Prop}[Thm]{Proposition}
\newtheorem{Cor}[Thm]{Corollary}
\newtheorem{Def}[Thm] {Definition}
\long\def\begcom#1\endcom{}
\def\ln{\operatorname{ln}}
\def\vep{\varepsilon}
\begin{document}

\title{Rescaled expansive measures for flows}

\author[Yun Yang]{Yun Yang$^{\dagger}$}
	\address{Yun Yang, Department of Mathematics, Virginia Polytechnic Institute and State University, Blacksburg, VA, United States 24060}
	\thanks{$^{\dagger}$Y.Y. is supported by a grant from the National Science Foundation (DMS-2000167).}

	\email{yunyang@vt.edu}

\date{}
\maketitle
\begin{abstract}
We introduce the notion of rescaled expansive measures to study a measure-theoretic formulation of rescaled expansiveness for flows, particularly in the presence of singularities. Equivalent definitions are established via reparametrizations of different regularities. Under the assumption of positive entropy, we prove the existence of invariant rescaled expansive measures. In the appendix, we derive a rescaled version of the Brin–Katok local entropy formula for flows, extending \cite{JCWZ} from nonsingular flows to general flows that may include singularities. This framework provides new tools for understanding entropy and expansiveness in continuous-time dynamical systems with singularities.

  \end{abstract}

\section{Introduction}
The presence of singularities has a significant impact on the dynamical properties of flows. For instance, flows with singularities may exhibit entropy degeneracy, a phenomenon that does not occur in flows without singularities. Analyzing the long-term behavior of flows with singularities is therefore substantially more challenging than for their nonsingular counterparts. In this paper, we investigate measure-theoretic expansiveness by scaling neighborhood sizes according to the local speed of the flow, leading to the notion of {\it rescaled expansive measures}.

Expansiveness is a classical property in the theory of chaotic dynamical systems, particularly for homeomorphisms, where it captures the idea that any two distinct points eventually separate by at least a fixed distance under iteration. This notion plays a central role in understanding sensitivity to initial conditions and the complexity of orbits.
For flows, however, defining expansiveness is more delicate due to the inherent ambiguity introduced by the flow direction: any two nearby points on the same trajectory remain arbitrarily close when shifted in time. To address this, several notions of expansiveness for flows have been developed, incorporating time reparametrizations to distinguish genuinely distinct orbits (\cite{BW, KS, Kom1, Ruggiero1996, WW}, among others).
In this work, we focus on the notion of rescaled expansiveness, introduced in \cite{WW}, and propose a new definition to study its measure-theoretic counterpart.

 A flow $\phi_t$ generated by a vector field $X$ is {\it rescaled expansive} on a compact invariant set $\Lambda$ if for any $\vep>0$ there is $\delta>0$ such that, for any $x,y\in \Lambda$ and any increasing continuous functions $h: \mathbb R\to \mathbb R$, if $$d(\phi_t(x), \phi_{h(t)}(y)\le \delta\|X(\phi_t(x))\|$$ for all $t\in \mathbb R$, then $\phi_{h(t)}(y)\in \phi_{[-\vep, \vep]}(\phi_t(x))$ for all $t\in \mathbb R$. 
 When the vector field has no singularities, rescaled expansiveness coincides with classical definitions such as those in \cite{BW, KS} (see Proposition 6.1 in \cite{WW}). The idea of rescaling by flow speed has its roots in the work of Liao on standard systems of differential equations \cite{L1, L2}, and has since been further developed and applied in numerous studies, including \cite{GY,HW,Y,SGW,WW}.

In this paper, we investigate a measure-theoretic formulation of rescaled expansiveness. This approach is motivated by the increasing interest in measure-theoretic analogs of classical dynamical properties, which often provide finer information about the typical behavior of systems.   



\begin{definition}
 For a flow $\phi$  generated by a $C^1$ vector field  $X$ on a compact manifold  $M$,  {\it the rescaled dynamical ball} centered in $x \in M$ with radius $\vep>0$ is defined as
 $$\Ga_\vep(x) = \{y \in M : \exists h \in \SC^0_0,\ \text{s.t.}\ d(\phi_s(x), \phi_{h(s)}(y))\leq \vep \|X(\phi_s(x))\| \enspace \forall s \in \R\}$$
 where $C^0_0$ is the set of continuous functions $h:\R\rightarrow\R$ with $h(0)=0.$
A Borel probability measure $\mu$ on $M$ is called {\it rescaled expansive} for the flow $\phi$ if there exists $\vep > 0$ such that $\mu(\Gamma_\vep(x)) = 0$ for every $x \in M$.
\end{definition}
We note here that when $\vep$ is sufficiently small, $\Ga_\vep(x)$ is the same as 
$$\Ga_\vep(x)=\{y \in M : \exists h \in \SC^0_0 \ \text{s.t.}\ d(\phi_{h(s)}(x), \phi_{s}(y))\leq \vep \|X(\phi_{h(s)}(x))\| \enspace \forall s \in \R\}.$$
This follows directly from Lemma \ref{increasing} by boosting $h$ to be an increasing homeomorphism and then applying $h^{-1}$. In Section 3, we explore several questions concerning the enhancement of the regularities for reparametrizations starting from  a merely continuous $h$.  We show not only how to boost to be increasing $Rep$, but also an ``almost identical" $Rep(\al)$ property can be obtained.
\begin{definition}Define
$$Rep=\{h\in C^0_0: h \text{ is an increasing homeomorphism}\} \text{ and }$$
$$Rep(\alpha)=\left\{h\in C^0_0: \left|\frac{h(s)-h(t)}{s-t} - 1\right|\leq \al \ \text{ for any } s\neq t \right\}.$$
Define {\it the rescaled dynamical balls} by reparametrizations with different regularities $Rep$ and $Rep(\al)$ to be:
$$ \Gamma_{\vep}^\al(x)=\{y\in X:\exists h\in Rep(\al)\enspace \text{s.t.} \enspace d(\phi_s(x),\phi_{h(s)}(y))\leq\varepsilon \|X(\phi_s(x))\|\enspace \forall s\in \mathbb{R}\} \text{ and }$$ $$ \Gamma_{\vep}^{\al,\ast}(x)=\{y\in X:\exists h\in Rep(\al)\cap Rep\enspace \text{s.t.} \enspace d(\phi_s(x),\phi_{h(s)}(y))\leq\varepsilon \|X(\phi_s(x))\|\enspace \forall s\in \mathbb{R}\}.$$
\end{definition}
 
 The following results concerning equivalent definitions of rescaled expansive measures tells us that the reparametrization in the definition of rescaled expansive measures can be strengthened to be  increasing and furthermore, almost identical.
\begin{theorem}[Equivalent definitions of rescaled expansive measures]\label{equivalent}
Let  $\phi$ be a flow generated by a $C^1$ vector field  $X$ on a compact manifold  $M$ and $\mu$ be a Borel probability measure on $M$. Then the following statements are equivalent:
    \begin{enumerate}
        \item\label{thmA_item1} $\mu$ is rescaled expansive for $\phi$;
          \item\label{thmA_item2} for every $\al \in (0,1)$ there exists $\vep>0$ such that $\mu(\Ga^\al_\vep(x))=0$ for for $\mu$-almost every $x \in M$;
        \item\label{thmA_item2} for every $\al \in (0,1)$ there exists $\vep>0$ such that $\mu(\Ga^{\al,\ast}_\vep(x))=0$ for $\mu$-almost every $x \in M$.
    \end{enumerate} 
\end{theorem}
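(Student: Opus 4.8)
The plan is to prove the three statements cyclically, as $(1)\Rightarrow(2)\Rightarrow(3)\Rightarrow(1)$. The first two implications are immediate from the nesting of reparametrization classes $Rep(\al)\cap Rep\subseteq Rep(\al)\subseteq\SC^0_0$, which gives the pointwise inclusions $\Ga^{\al,\ast}_\vep(x)\subseteq\Ga^\al_\vep(x)\subseteq\Ga_\vep(x)$ for every $x$, $\al$ and $\vep$. Indeed, if $\mu$ is rescaled expansive with constant $\vep_0$, then for each $\al$ the same $\vep_0$ gives $\mu(\Ga^\al_{\vep_0}(x))\le\mu(\Ga_{\vep_0}(x))=0$ for every $x$, proving $(1)\Rightarrow(2)$; and $\Ga^{\al,\ast}_\vep(x)\subseteq\Ga^\al_\vep(x)$ turns the $\mu$-a.e.\ vanishing in $(2)$ into that of $(3)$ with the same $\vep$. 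All the content is therefore in $(3)\Rightarrow(1)$, which must both upgrade from the restricted class $Rep(\al)\cap Rep$ to arbitrary continuous reparametrizations and upgrade from ``$\mu$-a.e.\ $x$'' to ``every $x$''.

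For $(3)\Rightarrow(1)$ I would first fix one value, say $\al=\tfrac12$, and let $\vep$ be the constant provided by $(3)$. The first step passes from arbitrary to almost-identical reparametrizations. Using Lemma \ref{increasing} to boost a merely continuous $h\in\SC^0_0$ to an increasing homeomorphism, and then the reparametrization-enhancement results of Section 3, the point is that a rescaled tracking inequality at a sufficiently small scale forces the boosted reparametrization to lie in $Rep(\al)\cap Rep$ without moving the base point: comparing the velocities $h'(s)\|X(\phi_{h(s)}(y))\|$ and $\|X(\phi_s(x))\|$ and using that $\phi_{h(s)}(y)$ stays within $\vep'\|X(\phi_s(x))\|$ of $\phi_s(x)$ forces the local slope of $h$ to be within $\al$ of $1$, hence $|h(s)-h(t)-(s-t)|\le\al|s-t|$ after integration. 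This yields $\vep'>0$ with $\Ga_{\vep'}(x)\subseteq\Ga^{\al,\ast}_\vep(x)$ for every $x$, so that $(3)$ gives $\mu(\Ga_{\vep'}(x))=0$ for $\mu$-almost every $x$.

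The second step upgrades this to every $x$ by a symmetry-and-transitivity argument that exploits the rescaling. Working with increasing homeomorphisms and substituting $u=h(s)$, I would first record an approximate symmetry $y\in\Ga_a(x)\Rightarrow x\in\Ga_{a^\ast}(y)$ and an approximate transitivity $x\in\Ga_a(q),\ w\in\Ga_b(x)\Rightarrow w\in\Ga_c(q)$ with $c=a+b(1+La)$, where $L=\max_M\|DX\|$ is the uniform Lipschitz constant of $X$ on the compact manifold $M$. The essential point is that, because the tracking is measured relative to $\|X\|$, the bound $\|X(p)-X(q)\|\le L\,d(p,q)$ becomes the multiplicative comparison $\|X(\phi_{h(s)}(y))\|\le(1+La)\|X(\phi_s(x))\|$, so all error terms stay proportional to $\|X\|$ and remain controlled uniformly over $M$, even along orbits approaching singularities. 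With $G=\{x:\mu(\Ga_{\vep'}(x))=0\}$ of full measure, fix an arbitrary $x_0$ and a small $\rho$ with $\rho^\ast+\rho(1+L\rho^\ast)\le\vep'$; if $\mu(\Ga_\rho(x_0))>0$ then $\Ga_\rho(x_0)\cap G\neq\emptyset$, and choosing $y^\ast$ there, symmetry and transitivity give $\Ga_\rho(x_0)\subseteq\Ga_{\vep'}(y^\ast)$, whence $\mu(\Ga_\rho(x_0))\le\mu(\Ga_{\vep'}(y^\ast))=0$, a contradiction. Thus $\mu(\Ga_\rho(x))=0$ for every $x$, which is $(1)$.

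I expect the main obstacle to be the uniform bookkeeping of the scales $\vep,\vep',\rho$ and the constant $L$ through the boosting and the symmetry/transitivity steps, together with verifying that the velocity comparison forces the \emph{global} $Rep(\al)$ condition rather than merely a local slope estimate. This is exactly where the rescaling by $\|X\|$ is indispensable: it converts every estimate into a relative one, so no constant degenerates as orbits pass near singularities, and the per-time slope bound integrates to the required global almost-identity.
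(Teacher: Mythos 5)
Your proposal is correct and follows essentially the same route as the paper: the easy implications come from the inclusions $\Ga^{\al,\ast}_\vep(x)\subseteq\Ga^\al_\vep(x)\subseteq\Ga_\vep(x)$, and the substantive implication is handled by boosting reparametrizations (Lemma \ref{increasing} together with the $Rep(\al)$ results of Section 3) and then a recentering step from a $\mu$-a.e.\ point of the full-measure set into an arbitrary ball, which is exactly what the paper's Corollary \ref{c1} packages as your ``symmetry plus transitivity.'' The only cosmetic difference is that you describe the $Rep(\al)$ upgrade via a velocity/slope comparison and integration, whereas the paper (Proposition \ref{4}, Lemma \ref{almostlinear}) obtains $|h(t)-t|\le\lambda t$ from the flowbox separation property and then passes to a piecewise-linear modification, since $h$ need not be differentiable.
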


Speaking of the measure-theoretic perspective of expansiveness,  another related concept was that of an \emph{expansive measure}, first introduced for discrete-time systems in \cite{MS}. Later on, \cite{CarrascoMorales2014} generalized it to continuous flows. 
We note here that (rescaled) expansive measures are not required to be invariant.  The existence of (rescaled) expansive invariant measures is far from guaranteed in general, and constructing such measures is significantly challenging.
 Indeed, in \cite{LMS} it was discussed an example of homeomorphism which is expansive, but does not admit any expansive measure. In \cite{AM}, it was shown that positive entropy is a sufficient condition for an ergodic invariant measure to be expansive, implying the existence of expansive invariant measures for every homeomorphism with positive topological entropy. 
Recently,  \cite{PRT} explored similar properties for flows without singularities. The following theorem generalizes the case from the singularity free flows to the general flows that might have singularities.  

Denote by $X$ a vector field on $M$. 
We call $x \in M$ a {\it singularity} of $X$ if $X(x) = 0$. Let ${\rm Sing}(X)$ denote the set of singularities  of $X$. See Definition 3.6 for the definition of positively rescaled expansive measures. 
\begin{theorem}[The existence of invariant rescaled expansive measures]\label{entropythm}
   Let  $\phi$ be a flow generated by a $C^1$ vector field $X$ on a compact manifold  $M$ with a Borel probability measure $\mu$ such that $\mu( {\rm Sing}(X))=0$.   If $\mu$ is an ergodic $\phi$-invariant measure with positive entropy, 
 then $\mu$ is (positively) rescaled expansive. In particular, if $\phi$ has positive topological entropy, then $\phi$ admits (positively) rescaled expansive invariant measures. 
\end{theorem}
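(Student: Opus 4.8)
The plan is to deduce the theorem from the rescaled Brin–Katok formula of the appendix together with the equivalences in Theorem~\ref{equivalent}, adapting the positive-entropy criterion of \cite{AM} to the rescaled setting in the presence of singularities. I would first reduce the claim. Since positively rescaled expansiveness is the stronger property — the forward dynamical balls (with shadowing required only for $s\ge 0$, cf. Definition 3.6) contain the two-sided balls $\Ga_\vep(x)$, so that $\mu(\Ga^{\al,+}_\vep(x))=0$ forces $\mu(\Ga_\vep(x))=0$ — it suffices to establish the positive version. Moreover, by the forward analogue of Theorem~\ref{equivalent}, it is enough to fix a single $\al\in(0,1)$ and produce some $\vep>0$ such that the forward, almost-identity rescaled ball $\Ga^{\al,+}_\vep(x)$ (the analogue of $\Ga^\al_\vep(x)$ using reparametrizations in $Rep(\al)$ and positive times) has $\mu\big(\Ga^{\al,+}_\vep(x)\big)=0$ for $\mu$-almost every $x$. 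I would fix such an $\al$ once and for all.

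The entropy hypothesis enters through the appendix's rescaled Brin–Katok formula, which I would apply to the forward, rescaled, reparametrized Bowen balls
$$B_T(x,\vep)=\{y\in M:\exists h\in Rep(\al),\ d(\phi_s(x),\phi_{h(s)}(y))\le \vep\,\|X(\phi_s(x))\|\ \text{for all } 0\le s\le T\}.$$
That formula identifies $h_\mu(\phi)$ with $\lim_{\vep\to 0}\limsup_{T\to\infty}-\tfrac1T\log\mu(B_T(x,\vep))$ for $\mu$-almost every $x$; here the hypothesis $\mu(\operatorname{Sing}(X))=0$ is precisely what makes the rescaled formula valid, since the factor $\|X\|$ compensates for the slowing of the flow as orbits approach the singular set. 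Because $\mu$ is ergodic with $h_\mu(\phi)>0$, this local rate is constant and positive $\mu$-a.e., so choosing $\vep$ small yields a full-measure set $A\subseteq M$ on which $\mu(B_T(x,\vep))\le e^{-T h_\mu(\phi)/2}$ along a sequence $T\to\infty$. As the balls $B_T(x,\vep)$ are nested decreasing in $T$, this forces $\inf_{T>0}\mu(B_T(x,\vep))=0$ for every $x\in A$.

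The conclusion then follows by a nesting argument: if $y\in\Ga^{\al,+}_\vep(x)$, the reparametrization $h\in Rep(\al)$ witnessing membership, restricted to $[0,T]$, shows $y\in B_T(x,\vep)$, whence $\Ga^{\al,+}_\vep(x)\subseteq\bigcap_{T>0}B_T(x,\vep)$ and $\mu\big(\Ga^{\al,+}_\vep(x)\big)\le\inf_T\mu(B_T(x,\vep))=0$ for $x\in A$. This is the desired a.e.\ statement, which Theorem~\ref{equivalent} (in its forward form) upgrades to the conclusion that $\mu$ is (positively) rescaled expansive. I expect the genuine difficulty to lie not in this final nesting step but in the interface with the rescaled Brin–Katok formula: one must arrange that the Bowen balls in that formula carry exactly the $Rep(\al)$ reparametrizations defining $\Ga^\al_\vep$, and, more seriously, that the exponential decay of $\mu(B_T(x,\vep))$ is not destroyed along orbits that spend long stretches near $\operatorname{Sing}(X)$, where $\|X\|$ degenerates. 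Verifying that the rescaling by $\|X(\phi_s(x))\|$ neither inflates these balls enough to kill the entropy estimate nor shrinks them below the reach of the a.e.\ hypothesis is the heart of the matter and the point at which the singular case departs from \cite{PRT}.

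Finally, the ``in particular'' clause follows from the variational principle: if $\phi$ has positive topological entropy, there is an ergodic $\phi$-invariant measure $\mu$ with $h_\mu(\phi)>0$, and any such $\mu$ automatically satisfies $\mu(\operatorname{Sing}(X))=0$. Indeed, were $\mu(\operatorname{Sing}(X))=1$, then $\mu$ would be carried by fixed points of the flow, so that $\phi_t=\id$ $\mu$-a.e.\ for all $t$, forcing $h_\mu(\phi)=0$, a contradiction. The first part of the theorem then applies to $\mu$ and produces the desired (positively) rescaled expansive invariant measure.
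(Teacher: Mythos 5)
Your proposal is correct and follows essentially the same route as the paper: both arguments rest on the lower bound $\liminf_{t\to\infty}\frac{-\log\mu(B_i^*(x,t,\vep))}{t}\ge h_\mu(\phi)>0$ for the rescaled reparametrized Bowen balls, the containment of the rescaled dynamical ball in every such Bowen ball (so that its measure is at most $\inf_t\mu(B_i^*(x,t,\vep))=0$), and the equivalence theorem to pass between the a.e.\ statement and the definition, the paper merely packaging this as a contradiction argument rather than your direct nesting argument. One caveat: you should invoke only the lower-bound half proved in Theorem \ref{11}, which needs no extra hypotheses, rather than the full rescaled Brin--Katok formula of Theorem \ref{thm: BK-formula}, whose upper-bound half requires $\ln\|X\|$ to be $\mu$-integrable --- an assumption not available in Theorem \ref{entropythm}.
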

 The majority of the related existing literature focuses on flows without singularities. 
The presence of singularities introduces substantial analytical subtleties. In this paper, we establish several foundational lemmas rigorously in the context of flows that include singularities.  To prove Theorem \ref{equivalent} and Theorem \ref{entropythm}, we rely on two main ingredients: control over reparametrizations and the lower bound of the decay rate of the rescaled Bowen balls. To develop these tools, the paper is organized as follows:
In Section 2, we establish several fundamental lemmas concerning separation properties based on the existence of flowboxes. These results lay the groundwork for Section 3, where we investigate different definitions of rescaled expansive measures with respect to reparametrization within $C^0_0, \text{Rep},$ and $\text{Rep}(\al)$ and then we finish the proof of Theorem \ref{equivalent}.
 In Section 4, we obtain a control on the lower bound of the decay rate of the rescaled Bowen balls and then apply this control to obtain Theorem \ref{entropythm}. Finally, in Appendix, we derive a rescaled version of the Brin–Katok local entropy formula for general flows that might have singularities.

\section{Flowbox and Separation}
In this section, we state a few important lemmas regarding the existence of flowboxes and some separation properties for general flows that might have singularities. 
The concept and term “flowbox” are well-established in the literature (\cite{PR}). Here we refer to a uniform relative version of the classical flowbox theorem, which was built in \cite{WW}.  

Let $M$ be a compact manifold and $X$ be a $C^1$ vector field on $M$. 
We call $x \in  M$ a {\it regular point} if $x \in M\setminus {\rm Sing}(X)$.  Denote by
$$T_xM(r)=\{v\in T_xM: \|v\|\le r\},  N_x=\{v\in T_xM: <v, X(x)>=0\}.$$
 By the compactness of $M$ and the $C^1$ smoothness of $X$,
there are constants $L>0$ and $r>0$ such that for any $x\in M$ the
vector fields
$\bar{X}=(\exp_x^{-1})_*(X|_{B_{r}(x)})$ in
$T_xM(r)$ are locally Lipschitz vector fields with Lipschitz constant $L$, which will be used many times in the proofs.  
For every regular point $x\in M$, denote by $$U_x(r\|X(x)\|)=\{v+tX(x)\in T_xM: v\in N_x, \|v\|\leq r\|X(x)\|, |t|\leq r\}$$ the tangent box of relative size $r$ at $x$. Define a $C^1$ map  $F_x:U_x(r\|X(x)\|)\to M$ to be $$F_x(v+tX(x))=\phi_t(\exp_x(v)).$$
\begin{Prop}[Proposition 2.3, \cite{WW}]\label{WW} For any $C^1$ vector field $X$ on $M$, there is $T_0>0$ such that for any regular point $x$ of ${X}$, $F_x:{U}_x(T_0\|X(x)\|)\to M$ is an embedding whose image contains no singularities  of $X$, and $m(D_pF_x)\geq 1/3$ and
$\|D_pF_x\|\leq3$ for every $p\in {U}_x(T_0\|X(x)\|)$.
\end{Prop}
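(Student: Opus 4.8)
The plan is to reduce everything to the exponential chart at $x$, where the uniform Lipschitz structure of $X$ recorded above is available, and to exploit the fact that the tangent box $U_x(T_0\|X(x)\|)$ is scaled by $\|X(x)\|$, so that all estimates become uniform in $x$ even as $x$ approaches a singularity. Concretely, write $\bar X=(\exp_x^{-1})_*(X|_{B_r(x)})$ for the pulled-back field on $T_xM(r)$, with flow $\bar\phi_t$, so that in the chart the map becomes $\bar F_x(v,t):=\exp_x^{-1}\circ F_x(v+tX(x))=\bar\phi_t(v)$, valid as long as the orbit segment stays inside the chart. First I would record a Gronwall estimate: since $\|\bar X(w)\|\le\|X(x)\|+L\|w\|$, the displacement satisfies $\|\bar\phi_s(v)\|\le 2T_0e^{LT_0}\|X(x)\|$ whenever $\|v\|\le T_0\|X(x)\|$ and $|s|\le T_0$. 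Choosing $T_0$ small, depending only on $L$, $r$, and $\max_M\|X\|$, keeps the whole orbit segment inside $T_xM(r)$, so $\bar F_x$ is well defined, and it shows that the image of the box lies within distance $\approx T_0\|X(x)\|$ of $x$.

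The core computation is the derivative bound. Differentiating $\bar F_x(v,t)=\bar\phi_t(v)$ gives
$$D\bar F_x(\delta v+\delta t\, X(x))=D_v\bar\phi_t(\delta v)+\delta t\,\bar X(\bar\phi_t(v)).$$
I would estimate the two terms separately: the variational equation together with $\|D\bar X\|\le L$ yields $\|D_v\bar\phi_t-\mathrm{Id}\|\le e^{LT_0}-1$, while the displacement bound and Lipschitzness give $\|\bar X(\bar\phi_t(v))-X(x)\|=\|\bar X(\bar\phi_t v)-\bar X(0)\|\le 2LT_0e^{LT_0}\|X(x)\|$. Since $\|\delta v\|\le\|\delta w\|$ and $|\delta t|\,\|X(x)\|\le\|\delta w\|$ for $\delta w=\delta v+\delta t\, X(x)$, these combine to $\|D\bar F_x(\delta w)-\delta w\|\le\eta(T_0)\|\delta w\|$ with $\eta(T_0)\to 0$ as $T_0\to 0$. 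Composing with $\exp_x$, whose derivative is uniformly close to the identity on the small region reached by compactness of $M$, I obtain $m(DF_x)\ge(1-\eta)(1-\eta')$ and $\|DF_x\|\le(1+\eta)(1+\eta')$; shrinking $T_0$ so both factors lie in $[1/3,3]$ gives the stated bounds. The same inequality $\|D\bar F_x-\mathrm{Id}\|\le\eta<1$ makes $\bar F_x$ a Lipschitz perturbation of the identity on the convex box, hence injective with $\|\bar F_x(w_1)-\bar F_x(w_2)\|\ge(1-\eta)\|w_1-w_2\|$; together with the nondegenerate derivative and the injectivity of $\exp_x$ on the chart, this shows $F_x$ is an embedding.

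Finally, the absence of singularities in the image follows from the displacement bound: every image point $y$ satisfies $d(x,y)\lesssim 2T_0e^{LT_0}\|X(x)\|$, so Lipschitzness of $X$ gives $\|X(y)\|\ge\|X(x)\|(1-2LT_0e^{LT_0})>0$ once $T_0$ is small. The one genuinely delicate point, and the reason the rescaling by $\|X(x)\|$ is essential, is maintaining all of these estimates uniformly as $\|X(x)\|\to 0$: a fixed-radius box would fail, but because the box, the orbit displacement, and the variation of $\bar X$ along the orbit all scale linearly in $\|X(x)\|$, the relative error $\eta(T_0)$ depends only on $T_0$ and $L$ and not on $x$. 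Verifying this homogeneity carefully in the Gronwall step is where I would concentrate the effort.
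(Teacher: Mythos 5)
The paper does not prove this proposition: it is imported verbatim from Proposition~2.3 of \cite{WW}, so there is no in-text argument to compare against. Your reconstruction is correct and follows the standard route: you work in the exponential chart, where the pulled-back fields $\bar X$ have a Lipschitz constant $L$ uniform in $x$ (this is exactly the standing hypothesis the paper records just before the statement, so you are entitled to it), you use Gronwall to confine the orbit segment to a ball of radius $O(T_0\|X(x)\|)$, the variational equation to get $\|D_v\bar\phi_t-\mathrm{Id}\|\le e^{LT_0}-1$, and the orthogonal splitting $\|\delta v\|\le\|\delta w\|$, $|\delta t|\,\|X(x)\|\le\|\delta w\|$ to convert everything into a relative error $\eta(T_0)$ independent of $x$; injectivity then follows from the Lipschitz-perturbation-of-the-identity argument on the convex box, and the absence of singularities from $\|X(y)\|\ge\|X(x)\|(1-2LT_0e^{LT_0})$. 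You also correctly isolate the only genuinely delicate point, namely that the box size, the orbit displacement, and the variation of $\bar X$ along the orbit all scale linearly in $\|X(x)\|$, which is what keeps the estimates uniform as $x$ approaches a singularity.
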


\begin{lemma}[Lemma 2.1 in ~\cite{WY}] \label{2}
For any $C^1$ vector field $X$ on $M$,  there is a constant $c>0$ such that  for any two regular points $x,y\in M$ satisfying   $d(x,y)<c\|X(x)\|$, one has $$\frac{1}{2}\|X(x)\|\leq\|X(y)\|\leq 2\|X(x)\|.$$
\end{lemma}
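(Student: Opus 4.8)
The plan is to exploit the uniform local Lipschitz control recorded in the preamble: in the exponential chart at any point, the pulled-back field $\bar X = (\exp_x^{-1})_*(X|_{B_r(x)})$ is $L$-Lipschitz on $T_xM(r)$. The conceptual heart of the matter is that the hypothesis $d(x,y) < c\|X(x)\|$ is a \emph{relative} smallness condition, so the additive Lipschitz estimate $\|\bar X(w) - \bar X(0)\| \le L\|w\|$ turns into a multiplicative comparison between $\|X(x)\|$ and $\|X(y)\|$.

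First I would fix $L$ and $r$ as above and set $M_0 = \max_{p\in M}\|X(p)\|$, finite by compactness of $M$. Given regular points $x,y$ with $d(x,y) < c\|X(x)\|$, note $d(x,y) < cM_0$, so choosing $c < r/M_0$ forces $y \in B_r(x)$, and (shrinking $r$ below the injectivity radius so that $\exp_x$ is a radial isometry) the point $w := \exp_x^{-1}(y)$ satisfies $\|w\| = d(x,y) < c\|X(x)\|$. Since $D(\exp_x)_0 = \mathrm{id}$ we have $\bar X(0) = X(x)$, and the Lipschitz bound gives
\[
\big|\,\|\bar X(w)\| - \|X(x)\|\,\big| \;\le\; \|\bar X(w) - X(x)\| \;\le\; L\|w\| \;<\; Lc\,\|X(x)\|,
\]
hence $(1 - Lc)\|X(x)\| \le \|\bar X(w)\| \le (1 + Lc)\|X(x)\|$.

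It remains to pass from $\|\bar X(w)\|$ back to $\|X(y)\|$, which I would do through the distortion of the chart: $X(y) = D(\exp_x)_w\,\bar X(w)$. Because the assignment $(x,v)\mapsto D(\exp_x)_v$ is continuous on the compact set $\{(x,v): x\in M,\ v\in T_xM,\ \|v\|\le r\}$ and restricts to the identity on $v=0$, after possibly shrinking $r$ I can arrange, for any prescribed $\eta>0$, that $\|D(\exp_x)_v - \mathrm{id}\|\le \eta$ and the same for its inverse, uniformly in $x$. This yields $(1-\eta)\|\bar X(w)\| \le \|X(y)\| \le (1+\eta)\|\bar X(w)\|$, so that $(1-\eta)(1-Lc)\|X(x)\| \le \|X(y)\| \le (1+\eta)(1+Lc)\|X(x)\|$. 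It then suffices to choose $\eta$ and $c$ small enough that $(1+\eta)(1+Lc)\le 2$ and $(1-\eta)(1-Lc)\ge \tfrac12$, alongside $c<r/M_0$.

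The main obstacle is conceptual rather than computational: $X(x)\in T_xM$ and $X(y)\in T_yM$ live in different tangent spaces, so any honest comparison of their norms must go through a single chart, and the resulting distortion has to be controlled \emph{uniformly in} $x$ — this is precisely where compactness of $M$ is used. A second point to watch is that $\|X(x)\|$ degenerates to $0$ near $\operatorname{Sing}(X)$, so a fixed-radius neighborhood would be useless; it is exactly the rescaled hypothesis $d(x,y)<c\|X(x)\|$ that converts the Lipschitz estimate into a multiplicative bound, which is what makes the statement robust in the presence of singularities.
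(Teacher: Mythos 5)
The paper itself offers no proof of this lemma---it is quoted verbatim from Lemma~2.1 of \cite{WY}---so there is nothing internal to compare against; judged on its own, your argument is correct and is exactly the expected one: the uniform Lipschitz constant $L$ for the pulled-back fields $\bar X$ turns the relative hypothesis $d(x,y)<c\|X(x)\|$ into the multiplicative bound $(1-Lc)\|X(x)\|\le\|\bar X(w)\|\le(1+Lc)\|X(x)\|$, and uniform control of the chart distortion finishes it. The only cosmetic blemish is writing $\|D(\exp_x)_v-\mathrm{id}\|$, which is not literally meaningful since $D(\exp_x)_v$ maps $T_xM$ to $T_{\exp_x(v)}M$; the two-sided norm estimate $(1-\eta)\|u\|\le\|D(\exp_x)_v u\|\le(1+\eta)\|u\|$ that you actually use is the correct formulation and follows from compactness as you say, so this does not affect the validity of the proof.
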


\begin{Lem}[Lemma 2.3 in ~\cite{WW}]\label{3}
For any $C^1$ vector field $X$ on $M$, there exists a constant $T_0>0$   such that for any    regular points $x\in M$, one has the following properties:
\begin{itemize}
  \item [(1)]For any $0<\vep\leq \frac{T_0}{3}$ and $t\in[-T_0,T_0]$, if $d(x,\phi_t(x))\leq \vep\|X(x)\|$, then $|t|\leq3\vep$.
  \item [(2)]For any $0<\vep\leq \frac{T_0}{3}$, if  $~\phi_{[0,t]}(x)\subset B(x,\vep\|X(x)\|)$, then $|t|\leq3\vep$.
\end{itemize}
\end{Lem}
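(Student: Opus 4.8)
The plan is to transport both statements into the linear flowbox chart $F_x\colon U_x(T_0\|X(x)\|)\to M$ supplied by Proposition \ref{WW}, where the orbit through $x$ becomes the straight segment $s\mapsto sX(x)$ and the uniform bounds $m(D_pF_x)\ge 1/3$ and $\|D_pF_x\|\le 3$ let us compare ambient distances in $M$ with Euclidean distances in $T_xM$ up to the factor $3$. The key observation is that for $|s|\le T_0$ one has $\phi_s(x)=F_x(sX(x))$ while $x=F_x(0)$, so that $d\big(x,\phi_s(x)\big)$ is pinched between $\tfrac13|s|\,\|X(x)\|$ and $3|s|\,\|X(x)\|$. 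Because every estimate is measured relative to $\|X(x)\|$ and the constants in Proposition \ref{WW} are uniform over all regular points, these bounds survive as $x$ approaches ${\rm Sing}(X)$; this relative scaling is exactly what keeps the argument alive in the presence of singularities.

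For item (1), the hypothesis $t\in[-T_0,T_0]$ guarantees $tX(x)\in U_x(T_0\|X(x)\|)$, so we may write $\phi_t(x)=F_x(tX(x))$. The conorm bound then yields the lower estimate
$$d\big(x,\phi_t(x)\big)=d\big(F_x(0),F_x(tX(x))\big)\ge \tfrac13\,\|tX(x)\|=\tfrac13\,|t|\,\|X(x)\|.$$
Combining this with the hypothesis $d(x,\phi_t(x))\le\vep\|X(x)\|$ and dividing by $\|X(x)\|>0$ (legitimate since $x$ is regular) gives $|t|\le 3\vep$ at once.

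For item (2), I would run a connectedness argument along the orbit. Set $t'=\min\{t,T_0\}$; for every $s\in[0,t']$ the point $\phi_s(x)=F_x(sX(x))$ lies in the chart and, being contained in $B(x,\vep\|X(x)\|)$, satisfies $\tfrac13 s\,\|X(x)\|\le d(x,\phi_s(x))\le \vep\|X(x)\|$ by the same lower bound, whence $s\le 3\vep$. Thus $t'\le 3\vep\le T_0$. Since $3\vep\le T_0$, the orbit cannot reach the time coordinate $T_0$ while remaining in the relative ball, so the case $t>T_0$ would force $3\vep=T_0$ and $t\le T_0$, a contradiction; hence $t'=t$ and $t\le 3\vep$. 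Lemma \ref{2} is used in the background to ensure that $\|X\|$ stays comparable to $\|X(x)\|$ along the segment, so that $B(x,\vep\|X(x)\|)$ is an honest neighborhood and the chart hypotheses remain in force.

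I expect the genuine difficulty to lie in justifying the lower distance bound $d(x,\phi_s(x))\ge\tfrac13|s|\,\|X(x)\|$ rather than the merely intrinsic (induced-length) inequality that the conorm bound produces directly: an ambient geodesic between two points of the flowbox could in principle leave the box and shortcut the estimate. The way around this is to fix $T_0$ (and the radius $r$ of the setup) small enough that every flowbox has diameter below the injectivity radius and sits inside a single normal chart $\exp_x$, in which the metric is comparable to the Euclidean one with distortion as close to $1$ as desired; a Gr\"onwall estimate using the uniform Lipschitz constant $L$ then controls the deviation of $s\mapsto \exp_x^{-1}(\phi_s(x))$ from the linear segment $sX(x)$, so that all the comparison constants can be tuned to the clean value $3$ simultaneously. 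Verifying that this tuning can be carried out once and for all, independently of the regular point $x$ and hence uniformly up to ${\rm Sing}(X)$, is the crux of the matter.
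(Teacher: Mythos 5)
This lemma is imported verbatim from \cite{WW} (Lemma 2.3 there) and the paper supplies no proof of its own, so the only question is whether your argument stands on its own. Your overall strategy --- read the orbit as the straight segment $s\mapsto sX(x)$ in the flowbox chart $F_x$ and convert the uniform bounds $m(D_pF_x)\ge 1/3$, $\|D_pF_x\|\le 3$ of Proposition \ref{WW} into the lower estimate $d(x,\phi_t(x))\ge\tfrac13|t|\,\|X(x)\|$ for $|t|\le T_0$ --- is the right one, and granting that estimate both items do follow essentially as you say. (Your part (2) needs only cosmetic repairs: treat $t<0$ by symmetry, and replace the garbled final contradiction by the direct observation that if $t>T_0=3\vep$ then $\phi_{T_0}(x)$ already violates the inclusion, since $d(x,\phi_{T_0}(x))\ge\tfrac13 T_0\|X(x)\|\ge\vep\|X(x)\|$.)

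The genuine gap is the one you flag yourself and then leave open: you never derive the lower distance bound from Proposition \ref{WW}, and the fix you sketch (injectivity radius, near-Euclidean normal charts, re-tuning all the constants) is both heavier than necessary and not actually carried out. The clean way to close it is a first-exit argument using only what Proposition \ref{WW} already gives. Let $\gamma$ be any path in $M$ from $x=F_x(0)$ to $\phi_t(x)=F_x(tX(x))$. If $\gamma$ stays in $F_x\bigl(U_x(T_0\|X(x)\|)\bigr)$, pull it back by $F_x^{-1}$: the conorm bound gives $\length(F_x^{-1}\circ\gamma)\le 3\,\length(\gamma)$, and the pullback joins $0$ to $tX(x)$, so $\length(\gamma)\ge\tfrac13|t|\,\|X(x)\|$. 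If $\gamma$ leaves the image, its initial piece up to the first exit time pulls back to a path from $0$ to the boundary of $U_x(T_0\|X(x)\|)$; since $N_x\perp X(x)$, every boundary point $v+sX(x)$ has norm at least $T_0\|X(x)\|$, so again $\length(\gamma)\ge\tfrac13 T_0\|X(x)\|\ge\tfrac13|t|\,\|X(x)\|$. Taking the infimum over $\gamma$ yields the estimate, uniformly in the regular point $x$: the uniformity you worry about at the end is exactly what Proposition \ref{WW} together with the relative scaling by $\|X(x)\|$ already provides, and no further tuning of $T_0$ or $r$ is needed. With this paragraph inserted, your proof is complete.
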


Based on the flowbox lemmas above, we obtain the following powerful separation properties.
\begin{Lem}[Separation property]\label{separation0}Let $\phi$ be a flow generated by a $C^1$ vector field $X$ on a compact space $M$. Then, there exists $T_0>0$ such that for any $0<T<T_0,$ there exists $\gamma>0$, such that 
$$d(\phi_t(x), x)\geq \gamma \|X(x)\|,$$
for any regular point $x\in M$ and $|t|\in [T,T_0]$.
\end{Lem}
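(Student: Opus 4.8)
The plan is to read the desired inequality as the \emph{contrapositive} of the first flowbox separation estimate, Lemma \ref{3}(1), and then to adjust the constants slightly so that the whole window $|t|\in[T,T_0]$ (in particular the left endpoint $|t|=T$) is covered. No new geometric input beyond the uniform flowbox is needed; the content is entirely in the bookkeeping of scales.

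Concretely, I would take $T_0$ to be the constant produced by Lemma \ref{3} (shrinking it, if necessary, so that Proposition \ref{WW} and Lemma \ref{2} hold with the same $T_0$). Given $0<T<T_0$, set $\vep=T/4$; since $T<T_0$ this gives $0<\vep<T_0/3$, so $\vep$ lies in the admissible range of Lemma \ref{3}(1). Applying that lemma in contrapositive form: for every regular $x$ and every $t\in[-T_0,T_0]$ with $|t|>3\vep$ we must have $d(x,\phi_t(x))>\vep\|X(x)\|$, since otherwise $d(x,\phi_t(x))\le\vep\|X(x)\|$ would force $|t|\le 3\vep$. Now any $t$ with $|t|\in[T,T_0]$ satisfies $t\in[-T_0,T_0]$ and $|t|\ge T>\tfrac{3T}{4}=3\vep$, so the hypothesis holds and, using the symmetry of $d$,
$$d(\phi_t(x),x)=d(x,\phi_t(x))>\vep\|X(x)\|=\tfrac{T}{4}\|X(x)\|.$$
Setting $\gamma=T/4$ then gives the claim.

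The step requiring genuine care is not the deduction above but the flowbox estimate it rests upon. Were one to argue directly from Proposition \ref{WW} instead of citing Lemma \ref{3}, the main obstacle would be converting a lower bound in the tangent-box coordinates into a lower bound for the ambient distance $d$: there one has $\phi_t(x)=F_x(tX(x))$ and $x=F_x(0)$, so the coordinate separation is exactly $|t|\,\|X(x)\|\ge T\|X(x)\|$, and the bounds $m(D_pF_x)\ge 1/3$, $\|D_pF_x\|\le 3$ make $F_x^{-1}$ Lipschitz and thus control the \emph{intrinsic} distance inside the box. One would still have to exclude short ambient geodesics that leave and re-enter the flowbox and, crucially, verify that $\gamma$ can be chosen uniformly over all regular points $x$. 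These are precisely the points that the uniform relative flowbox of Proposition \ref{WW} and the resulting Lemma \ref{3} already settle, which is why invoking Lemma \ref{3}(1) is the economical route.
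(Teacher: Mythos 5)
Your proof is correct and rests on exactly the same ingredient as the paper's, namely Lemma \ref{3}(1); the paper merely phrases the same deduction as a proof by contradiction with a sequence $x_n$, $t_n$, whereas you take the contrapositive directly. Your version is, if anything, slightly sharper, since it produces the explicit constant $\gamma=T/4$ that the paper's contradiction argument leaves implicit.
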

\begin{proof}Take $T_0$ to be the number given in Lemma \ref{3}. We claim that  $T_0$ is what we want. Otherwise, there exists $T$ satisfying $0<T<T_0$ such that for any $n$ there exists $x_n\in M$ with 
$$d(\phi_{t_n}x_n,x_n)<\frac{1}{n}\|X(x_n)\|,$$
for some $|t_n|\in [T,T_0]$.  
By Lemma \ref{3}, we have control on the time $|t_n|\leq \frac{3}{n}$ for any sufficiently large $n$, which is a contradiction with the requirement that $|t_n|\in [T,T_0]$.
\end{proof}
\begin{Cor}\label{separation1}Let $\phi$ be a flow generated by a $C^1$ vector field $X$ on a compact space $M$. Then, there exists $T_0>0$ such that for any $0<T<T_0,$ there exists $\gamma>0$, such that 
$$d(\phi_t(x), y)\geq \gamma \|X(x)\|,$$
for any two regular points $x,y \in M$  with $d(x,y)\leq  \gamma \|X(x)\|$ and any $|t|\in[T,T_0]$.
\end{Cor}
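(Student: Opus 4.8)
The plan is to deduce this directly from the Separation property (Lemma \ref{separation0}) together with the triangle inequality; no new dynamical input is needed, so the work is purely a matter of organizing the constants correctly. I would take $T_0$ to be the constant furnished by Lemma \ref{separation0}, and fix an arbitrary $T$ with $0<T<T_0$. Applying Lemma \ref{separation0} to this pair $(T,T_0)$ produces a constant, which I will call $\gamma_0>0$, such that
$$d(\phi_t(x),x)\geq \gamma_0\|X(x)\|$$
for every regular point $x\in M$ and every $|t|\in[T,T_0]$. The claim is that $\gamma:=\gamma_0/2$ is the constant we seek.

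Next I would run the triangle inequality. Suppose $x,y\in M$ are regular points with $d(x,y)\leq \gamma\|X(x)\|$, and let $|t|\in[T,T_0]$. Since
$$d(\phi_t(x),x)\leq d(\phi_t(x),y)+d(y,x),$$
rearranging and inserting both bounds gives
$$d(\phi_t(x),y)\geq d(\phi_t(x),x)-d(x,y)\geq \gamma_0\|X(x)\|-\gamma\|X(x)\|=\tfrac{\gamma_0}{2}\|X(x)\|=\gamma\|X(x)\|,$$
which is exactly the desired inequality. Because $T_0$ is chosen once and then $\gamma$ depends only on the fixed $T$ (through $\gamma_0$), the quantifier structure matches the statement: $T_0$ first, then for each $T$ a single $\gamma$.

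There is no genuine obstacle in this argument; the only point requiring care is the bookkeeping of the single constant $\gamma$, which must play two roles simultaneously. On the one hand it caps the admissible displacement $d(x,y)$, and on the other it is the separation radius in the conclusion. This double duty is precisely what forces the factor of $1/2$: by demanding $d(x,y)\le\gamma\|X(x)\|$ with $\gamma=\gamma_0/2$, the displacement consumes at most half of the separation $\gamma_0\|X(x)\|$ guaranteed by Lemma \ref{separation0}, leaving the other half as the lower bound. One should also note that $\|X(x)\|\neq 0$ since $x$ is regular, so all the relative-size expressions are meaningful, and that the bound is stated in terms of $\|X(x)\|$ rather than $\|X(y)\|$, so no appeal to the speed-comparison Lemma \ref{2} is required here.
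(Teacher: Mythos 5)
Your proposal is correct and is essentially identical to the paper's own argument: both take $T_0$ from Lemma \ref{separation0}, halve the separation constant (the paper phrases this as letting $2\gamma$ play the role of the lemma's $\gamma$), and conclude by the triangle inequality. Your write-up is in fact slightly more careful, since the paper's displayed inequality omits the $\|X(x)\|$ factors that your version correctly carries through.
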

\begin{proof}Let $T_0>0$ be the one given in Lemma \ref{separation0}. Then, for any $0<T<T_0,$ there exists $\gamma>0$, such that $d(\phi_t(x), x)\geq 2\gamma \|X(x)\|,$
for any regular point $x\in M$ and $|t|\in [T,T_0]$.
 Here we use $2\gamma$ to stand for the $\gamma$ in Lemma \ref{separation0}. 
Then for any two regular points $x,y \in M$  with $d(x,y)<\gamma$, we have
$$d(\phi_t(x),y)>d(\phi_t(x), x)-d(x,y)>2\gamma-\gamma=\gamma,$$
for any $|t|\in [T,T_0]$.
Hence the proof is completed. 
\end{proof}

\section{Regularity of reparametrizations}
Our goal in this section is to show some interesting properties regarding boosting the regularity of the reparameterizations for flows. Here by saying different regularity, we mean the reparametrization within $C^0_0, \text{Rep},$ and $\text{Rep}(\al)$ respectively. 
In particular, we shall build connections among reparametrizations with different regularities and moreover obtain some equivalent definitions of the rescaled expansive measures. Similar results were also obtained for the positively rescaled expansive measures.  

\subsection{From continuity to increasing continuity}
In this subsection, we obtain an increasing continuous reparametrization from a continuous reparametrization. A similar result was obtained in \cite{BW} (Lemma 2) and \cite{CarrascoMorales2014} (Lemma 2.3) for flows without singularities. Our results generalize this property to general flows that might have singularities. Due to the existence of singularities, the proof is significantly more difficult.
\begin{definition}
Let $\phi$ be a flow generated by a $C^1$ vector field $X$ on a compact space $M$. For $x \in M$ and $\vep>0$, denote by
$$ \Ga^{\ast}_{\vep}(x)=\{y\in X:\exists h\in Rep \enspace \text{s.t.} \enspace d(\phi_s(x),\phi_{h(s)}(y))\leq\varepsilon \|X(\phi_s(x))\|\enspace \forall s\in \mathbb{R}\}.$$
\end{definition}

\begin{lemma}\label{increasing}
  Let $\phi$ be a flow generated by a $C^1$ vector field $X$ on a compact space $M$.  Then for every $\vep>0$ there exists $\de>0$ such that $\Ga_\de(x) \subseteq \Ga^{\ast}_\vep(x)$ for any regular point $x \in M$.
\end{lemma}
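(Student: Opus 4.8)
The plan is to upgrade a merely continuous reparametrization $h$ to an increasing homeomorphism by \emph{synchronizing} the two orbits along the normal cross-sections supplied by the uniform flowbox theorem. Fix $\vep>0$ and let $y\in\Ga_\de(x)$ be witnessed by some $h\in C^0_0$ with $d(\phi_s(x),\phi_{h(s)}(y))\le \de\|X(\phi_s(x))\|$ for all $s$, where $\de$ is to be chosen small relative to the uniform constants $T_0$ (Proposition \ref{WW}, Lemma \ref{3}), $c$ (Lemma \ref{2}), and the separation constant $\gamma$ (Corollary \ref{separation1}). First I note that $y$ is itself regular: at $s=0$ we have $d(x,y)\le\de\|X(x)\|$, so Lemma \ref{2} gives $\|X(y)\|\ge\tfrac12\|X(x)\|>0$. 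For each $s$, since $\de<T_0$, the point $\phi_{h(s)}(y)$ lies in the image of the embedding $F_{\phi_s(x)}$ on $U_{\phi_s(x)}(T_0\|X(\phi_s(x))\|)$, so there is a unique small shift $\rho(s)$ with $\phi_{h(s)+\rho(s)}(y)$ landing on the normal disk $\Sigma_s$ through $\phi_s(x)$ (the image under $\exp_{\phi_s(x)}$ of a relative-size-$r$ disk in $N_{\phi_s(x)}$); Lemma \ref{3} bounds $|\rho(s)|$ by a small multiple of $\de$, and Corollary \ref{separation1} keeps the orbit of $y$ from returning to $\Sigma_s$ on a moderate time scale, so $\rho(s)$ is the unambiguous small choice. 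I then define $\tilde h(s):=h(s)+\rho(s)-\rho(0)$, so that $\tilde h(0)=0$.

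Next I would verify the three defining properties of $Rep$. Continuity of $\tilde h$ follows from continuity of $h$ together with the continuous dependence of $F_{\phi_s(x)}$ and of the crossing time $\rho(s)$ on $s$. For strict monotonicity I argue locally and then chain: inside one flowbox the sections $\Sigma_s$ are exactly the constant-time slices, and the orbit of $y$ meets them in the flow direction, so the crossing time is strictly increasing in $s$; partitioning any $[s_1,s_2]$ into pieces each lying in a single flowbox yields $\tilde h(s_1)<\tilde h(s_2)$, and the global shift by $-\rho(0)$ preserves order. Finally the tracking estimate is inherited with a slightly larger constant: since $|\rho(s)-\rho(0)|$ is a small multiple of $\de$, the Lipschitz bound on $X$ together with the speed comparison of Lemma \ref{2} shows $\phi_s(x)$ is displaced by at most a comparable multiple of $\|X(\phi_s(x))\|$ over such a short time, whence $d(\phi_s(x),\phi_{\tilde h(s)}(y))\le\vep\|X(\phi_s(x))\|$ provided $\de$ was chosen small enough in terms of $\vep$.

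The remaining point is properness, i.e. that the strictly increasing continuous $\tilde h$ is onto $\R$, equivalently $\tilde h(s)\to\pm\infty$ as $s\to\pm\infty$. Suppose instead $\tilde h(s)\to L<\infty$ as $s\to+\infty$; then $\phi_{\tilde h(s)}(y)\to\phi_L(y)=:p$, which is regular because $y$ is. The tracking estimate then forces the orbit of $x$ to remain in a fixed rescaled ball around $p$ for all large $s$, contradicting the bound on the time an orbit can spend in such a ball (Lemma \ref{3}(2)); the case $s\to-\infty$ is symmetric. Hence $\tilde h\in Rep$ and $y\in\Ga^\ast_\vep(x)$, with $\de$ depending only on $\vep$ and the uniform constants, which is exactly the asserted inclusion.

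I expect the genuine difficulty to be concentrated in making every estimate \emph{uniform in the presence of singularities}: as $\phi_s(x)$ approaches $\mathrm{Sing}(X)$ the absolute size of the flowbox and of the admissible shift $\rho(s)$ degenerate, and only their \emph{relative} size, measured against $\|X(\phi_s(x))\|$, stays controlled. The uniform relative flowbox of Proposition \ref{WW}, the speed comparison of Lemma \ref{2}, and the separation estimates of Lemma \ref{separation0} and Corollary \ref{separation1} are precisely what prevent the construction of $\rho$, the monotonicity argument, and the properness dichotomy from collapsing near the singular set; arranging these rescaled comparisons so that a single $\de=\de(\vep)$ works simultaneously for \emph{every} regular basepoint $x$ is the crux of the proof.
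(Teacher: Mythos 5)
Your construction is genuinely different from the paper's: you synchronize the two orbits along the normal sections $\Sigma_s$ supplied by the flowbox and set $\tilde h(s)=h(s)+\rho(s)-\rho(0)$, whereas the paper never introduces cross-sections at all; it proves the discrete monotonicity $h((n+1)T)>h(nT)$ directly by a separation argument and then passes to the piecewise-linear interpolation through the points $(nT,h(nT))$. Several ingredients of your version are sound: $\rho(s)$ is well defined and of size $O(\de)$ by Proposition \ref{WW} and Lemma \ref{3}(1), the tracking estimate survives the shift by $\rho(s)-\rho(0)$ up to a constant (via Lemma \ref{2} and the Lipschitz bound), and your properness argument via Lemma \ref{3}(2) is a real point in your favor, since surjectivity of the reparametrization is glossed over in the paper.

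The gap is in the monotonicity step, which is the actual content of the lemma. First, the local picture is wrong: inside the flowbox $F_{\phi_{s_0}(x)}$ the constant-time slices are the flow images $\phi_t\bigl(\exp_{\phi_{s_0}(x)}(N_{\phi_{s_0}(x)}(\cdot))\bigr)$ of the central normal disk, not the normal disks $\Sigma_{s_0+t}=\exp_{\phi_{s_0+t}(x)}(N_{\phi_{s_0+t}(x)}(\cdot))$ based at the later points of the orbit; these are different surfaces, so ``the sections $\Sigma_s$ are exactly the constant-time slices'' is false, and the claim that the family $\{\Sigma_s\}$ is pairwise disjoint and ordered needs its own rescaled transversality argument, uniform near $\sing(X)$. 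Second, and more seriously, even granting that ordering, monotonicity of the crossing time $g(s)=h(s)+\rho(s)$ does not follow from ``the orbit of $y$ meets the sections in the flow direction'': $h$ is merely continuous, so $h(s)-h(s_0)$ could a priori be large for $s$ arbitrarily close to $s_0$ — the $y$-orbit could make a long excursion and recross an earlier section — in which case $\phi_{g(s)}(y)$ is not the first crossing after $\phi_{g(s_0)}(y)$ and the order of crossing times need not match the order of the sections; your $\tilde h$ is then not even known to be injective. Ruling this out is exactly where the separation property must be used quantitatively, and it is the heart of the paper's proof: there one shows, by applying the intermediate value theorem to $|h(s+t_0)-h(t_0)-s|$ together with Lemma \ref{separation0} and Corollary \ref{separation1}, that this quantity stays below $T$ for all $s\in[0,T]$ and all $t_0$, which simultaneously yields $h(t_0+T)>h(t_0)$ and the locality your argument silently assumes. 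Without an estimate of this kind your construction does not close.
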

\begin{proof}Let $L>0$ be the local Lipschitz constant of $X$.  By the continuous dependence of the solution on the initial value,  for any regular points $x\in M$ and $t\in \mathbb{R}$, we have $$e^{-L|t|}\|X(x)\|\leq\|X(\phi_{t}(x))\|\leq  e^{L|t|}\|X(x)\|.$$ Moreover, for $x,y\in M$, we have $e^{-L|t|}d(x,y)\leq d(\phi_{t}(x),\phi_{t}(y))\leq e^{L|t|}d(x,y)$. 

Let $T_0$ be the constant given in Corollary \ref{separation1}. We claim that
 for any $0<T<T_0$, there are $\delta_T>0$ such that for any two regular points $x$ and $y$ if there exists $h\in C^0_0$ such that $$d(\phi_t(x),\phi_{h(t)}(y))<\delta_T\|X(\phi_t(x))\|$$ for every $t\in\mathbb{R}$ and some $x,y\in X$,
then $h(T)>0$. Arguing by contradiction, if the claim is not true, then for any $n>0$, 
 there exist two regular points $x_n,y_n\in M$ and a continuous map $h_n$ of $\R$ with $h_n(0)=0$ such that 
 $$d(\phi_t(x_n), \phi_{h_n(t)}(y_n))<\frac{1}{n} \|X(\phi_t(x_n))\|, \forall t\in\mathbb{R},$$
 and $h_n(T)<0$. Take $n$ sufficiently large such that $1/n\leq \gamma_T e^{-LT_0}(2(e^{2LT_0}+1))^{-1},$ where $\gamma_T$ is the constant given Corollary \ref{separation1} and $L$ is the Lipschitz constant of the flow. Here $\gamma_T$ is taken to be smaller than the constant $c$ given in Lemma \ref{2}. 
 Then we have that 
 \begin{eqnarray*}d(\phi_t(y_n), \phi_{h_n(t)}(y_n))&\leq &d(\phi_t(y_n),\phi_t(x_n))+ d(\phi_t(x_n), \phi_{h_n(t)}(y_n))\\
 &\leq & \frac{1}{n}e^{L|t|}\|X(x_n)\|+\frac{1}{n}\|X(\phi_t(x_n))\|\\
 &\leq& \frac{1}{n}e^{2L|t|}\|X(\phi_t(x_n))\|+\frac{1}{n}\|X(\phi_t(x_n))\|\\
 &\leq& \gamma_Te^{-LT_0}\|X(\phi_t(y_n))\|,
 \end{eqnarray*}
 for any $-T_0\leq t\leq T_0$. 
Since $h_n(T)<0,$  we have $T\leq |h_n(T)-T|$. Besides, we know that $h_n(0)=0$. Hence, by continuity, there exists $s\in (0,T]$ such that 
 $|h_n(s)-s|=T$. Then $$d(\phi_s(y_n), \phi_{h_n(s)}(y_n))\leq \gamma_Te^{-LT_0}\|X(\phi_s(y_n))\| \leq \gamma_T\|X(\phi_s(y_n))\|,$$
 which contradicts Corollary  \ref{separation1}. Hence we finish the proof of the claim.

Now let's show moreover that $h((n+1)T)-h(nT)>0$, for any $n$. Define 
$$h'(t)=h(t+nT)-h(nT).$$
This gives $h'(0)=0$ and thus $h'\in C_0^0$. From $d(\phi_t(x),\phi_{h(t)}(y))<\delta_T\|X(\phi_t(x))\|$, it follows that
$$d(\phi_{t-nT}(\phi_{nT}(x)),\phi_{h(t)-h(nT)}(\phi_{h(nT)}(y)))<\delta_T\|X(\phi_{t-nT}(\phi_{nT}(x)))\|,$$
for any $t\in\R$. Replacing $t-nT$ by $s$ gives us
$$d(\phi_{s}(\phi_{nT}(x)),\phi_{h'(s)}(\phi_{h(nT)}(y)))<\delta_T\|X(\phi_{s}(\phi_{nT}(x)))\|,$$
for any $s\in\R$. Applying the above claim to $h'$, we obtain that $h'(T)>0$, which implies that
$$h((n+1)T)-h(nT)>0.$$

Besides, for any $\vep>0$, by the continuity of the vector field, there exists a constant $0<T<\frac{T_0}{3}$ such that
$$
d(x,\phi_s(x))\leq \frac{\vep}{2}\|X(x)\|, \text{ for any } |s|\in [0,T] \text{ and }x\in M. 
$$
For this $T$, we fix $\delta_T$ and $\tau_T$ as above and also
$0<\delta<\min\left(\delta_T,\frac{\vep}{2}e^{-LT}\right)$.

For $y\in \Gamma_\delta(x)$, there is $h\in C^0_0$ such that
$d(\phi_t(x),\phi_{h(t)}(y))<\delta \|X(\phi_t(x))\|$ for every $t\in\mathbb{R}$. As $\delta<\delta_T$, we have
$$d(\phi_t(x),\phi_{h(t)}(y))<\delta_T\|X(\phi_t(x))\|$$ for every $t\in\mathbb{R}$. 
According to the choice of $\delta_T$, we have that
$h((n+1)T)-h(nT)>0$ for every $n\in \Z$.
Now let's define an increasing homeomorphism $\hat{h}: \mathbb{R}\to \mathbb{R}$ by $$\hat{h}(nT)=h(nT)$$ ($\forall n\in\mathbb{Z}$) and extend by linearity to $[nT,(n+1)T]$ for every $n\in\mathbb{Z}$.
From the properties of $h$, we know that $\hat{h}(0)=0$ and that $\hat{h}$ is an increasing homeomorphism. 
Moreover, for every $t\in [nT,(n+1)T]$ (for some integer $n$) there is $t'\in [nT,(n+1)T]$ with $\hat{h}(t)=h(t')$ and so
\begin{eqnarray*}
d(\phi_t(x),\phi_{\hat{h}(t)}(y))&=&d(\phi_t(x),\phi_{h(t')}(y))\\
&\leq& 
d(\phi_t(x),\phi_{t'}(x))+d(\phi_{t'}(x),\phi_{h(t')}(y))\\
&\leq& \frac{\vep}{2}\|X(\phi_{t}(x))\|+\delta\|X(\phi_{t'}(x))\|\\
&\leq& \frac{\vep}{2}\|X(\phi_{t}(x))\|+\delta e^{LT}\|X(\phi_{t}(x))\|\\
&=&\vep \|X(\phi_{t}(x))\|,
\end{eqnarray*}
for any $t\in \R$. Hence the proof is complete. 
\end{proof}

\begin{corollary}
\label{c1}
Let $\phi$ be a flow generated by a $C^1$ vector field $X$ on a compact space $M$. Then there exists a constant $c>0$ such that for every $0<\vep\leq c$, there is $\delta>0$ such that
$\Gamma_\delta(x)\subset \Gamma^{\ast}_\vep(y)$ for every regular points $x,y\in X$ with $y\in \Gamma_\delta(x)$.
\end{corollary}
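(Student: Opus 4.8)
The plan is to treat $x$ as a bridge between $y$ and $z$: since both lie in $\Gamma_\delta(x)$, each shadows the $x$-orbit after a suitable time change, and I want to cancel the $x$-orbit out so as to obtain a \emph{single} reparametrization carrying the $y$-orbit onto the $z$-orbit. The obstruction to doing this naively is that the two time changes coming from the definition of $\Gamma_\delta$ are only continuous, so they cannot be composed or inverted; the first move is therefore to upgrade them to increasing homeomorphisms using Lemma \ref{increasing}.

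Concretely, let $c_0$ be the constant from Lemma \ref{2}, set $c=c_0$, and for $0<\vep\le c$ put $\vep_1=\min\{\vep/4,\,c_0\}$. Apply Lemma \ref{increasing} with target $\vep_1$ to produce $\delta>0$ with $\Gamma_\delta(x)\subseteq\Gamma^{\ast}_{\vep_1}(x)$ for every regular $x$. Now take regular points $x,y$ with $y\in\Gamma_\delta(x)$ and let $z\in\Gamma_\delta(x)$ be arbitrary. By the choice of $\delta$ there are increasing homeomorphisms $g,k\in Rep$ with
$$d(\phi_s(x),\phi_{g(s)}(y))\le\vep_1\|X(\phi_s(x))\|,\qquad d(\phi_s(x),\phi_{k(s)}(z))\le\vep_1\|X(\phi_s(x))\|$$
for all $s\in\R$. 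The triangle inequality then gives $d(\phi_{g(s)}(y),\phi_{k(s)}(z))\le 2\vep_1\|X(\phi_s(x))\|$.

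Next I substitute $\tau=g(s)$ and set $h=k\circ g^{-1}$. Since $g,k$ are increasing homeomorphisms fixing $0$, so is $h$, hence $h\in Rep$; and as $s$ runs over $\R$ so does $\tau$. Rewriting the last inequality in the variable $\tau$ yields $d(\phi_\tau(y),\phi_{h(\tau)}(z))\le 2\vep_1\|X(\phi_s(x))\|$ with $s=g^{-1}(\tau)$. It remains to replace $\|X(\phi_s(x))\|$ by $\|X(\phi_\tau(y))\|$. Because $d(\phi_s(x),\phi_\tau(y))=d(\phi_s(x),\phi_{g(s)}(y))\le\vep_1\|X(\phi_s(x))\|\le c_0\|X(\phi_s(x))\|$, Lemma \ref{2} applies at the regular points $\phi_s(x)$ and $\phi_\tau(y)$ and gives $\|X(\phi_s(x))\|\le 2\|X(\phi_\tau(y))\|$. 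Hence $d(\phi_\tau(y),\phi_{h(\tau)}(z))\le 4\vep_1\|X(\phi_\tau(y))\|\le\vep\|X(\phi_\tau(y))\|$ for all $\tau\in\R$, which is exactly $z\in\Gamma^{\ast}_\vep(y)$. Since $z\in\Gamma_\delta(x)$ was arbitrary, $\Gamma_\delta(x)\subseteq\Gamma^{\ast}_\vep(y)$.

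The only genuinely delicate point is the one already flagged: without Lemma \ref{increasing} the time changes $g,k$ need not be invertible, and the composition $k\circ g^{-1}$ — which is what removes the auxiliary point $x$ — would not be defined; everything else is a triangle inequality together with the uniform speed-comparison of Lemma \ref{2}, where the only care needed is to track the two factors of $2$ (one from the triangle inequality, one from the speed comparison) that force the choice $\vep_1=\vep/4$.
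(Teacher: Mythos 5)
Your proof is correct and follows essentially the same route as the paper: upgrade both time changes to increasing homeomorphisms via Lemma \ref{increasing}, apply the triangle inequality along the orbit of $x$, transfer the speed factor from $\phi_s(x)$ to $\phi_{g(s)}(y)$ using Lemma \ref{2}, and cancel $x$ by the substitution $h = k\circ g^{-1}$. Your bookkeeping of the two factors of $2$ (and the explicit check that Lemma \ref{2} is applicable) is in fact slightly more careful than the paper's own write-up.
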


\begin{proof}
For any $\vep>0$, by Lemma \ref{increasing}, there exists a constant $\delta>0$ such that 
if $y\in \Gamma_\delta(x)$ and $z\in \Gamma_\delta(x)$ there are increasing homeomorphisms
$\hat{h},\tilde{h}\in C^0_0$ such that
$d(\phi_t(x),\phi_{\hat{h}(t)}(y))\leq \frac{\vep}{4}\|X(\phi_t(x))\|$ and $d(\phi_t(x),\phi_{\tilde{h}(t)}(z))\leq\frac{\vep}{4}\|X(\phi_t(x))\|$
for every $t\in \mathbb{R}$.
It follows that
$d(\phi_{\hat{h}(t)}(y),\phi_{\tilde{h}(t)}(z))\leq\frac{\vep}{2} \|X(\phi_t(x))\|$ for all $t\in \mathbb{R}$. 
Let $c$ be the constant given in Lemma \ref{2}. By Lemma \ref{2}, it follows that
$$d(\phi_{\hat{h}(t)}(y),\phi_{\tilde{h}(t)}(z))\leq\vep  \|X(\phi_{\hat{h}(t)}(y))\|,$$ for all $t\in \mathbb{R}$. 
Replacing $t$ by $\hat{h}^{-1}(t)$, we obtain that
$$
d\left(\phi_t(y),\phi_{(\tilde{h}\circ \hat{h}^{-1})(t))}(z)\right)\leq\vep \|X(\phi_t(y))\|,
$$
for all  $t\in \mathbb{R}$. Since both $\tilde{h}$ and $\hat{h}^{-1}$ are increasing homeomorphisms preserving $0$, $\tilde{h}\circ \hat{h}^{-1}$ is also a continuous increasing homeomorphim preserving $0$, we conclude that $z\in \Gamma^{\ast}_\vep(y)$, which finishes the proof. 
\end{proof}

\subsection{``Almost identical" reparametrizations}
The following proposition is about an ``almost identical" type of control on the reparametrizations. Based these control, we will obtain the relation between the dynamical balls $\Gamma_{\vep}(x)$ and $\Gamma_{\vep}^{\al}(x)$ and furthermore the relation between the dynamical balls $\Gamma^{\ast}_{\vep}(x)$ and $\Gamma_{\vep}^{\al,\ast}(x)$.
We note that a similar result for increasing and hence positive reparametrizations on $[0,T]$ was given in \cite{WangWen} (Proposition 1). In contrast, our setting allows for reparametrizations that are merely in $C_0^0$, potentially decreasing and taking negative values. 
\begin{Prop}\label{4}
Let $\phi$ be a flow generated by a $C^1$ vector field $X$ on a compact space $M$.  Then, there exists a constant $T_0$ such that for any $\lambda>0$ and $0<b<T_0$, there is $\vep>0$ such that   for any two regular points $x,y\in M$ and any $T\in[b,T_0]$, if  there exists $h\in C^0_0$ with $h(0)=0$ such that $$d(\phi_t(x),\phi_{h(t)}(y))\leq \vep \|X(\phi_t(x))\|,$$for any $t\in[0,T]$, then one has $|h(t)-t|\leq \lambda t,$ for any $t\in[b,T].$
\end{Prop}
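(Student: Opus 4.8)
The plan is to reduce the proportional estimate to a uniform constant one. Let $T_0$ be the constant from Lemma~\ref{separation0}, set $\eta:=\min(\lambda b,\, T_0/2)$, and write $g(t):=h(t)-t$, so that $g$ is continuous with $g(0)=0$. I claim it suffices to prove that, for $\vep$ small enough (depending only on $\lambda,b,T_0$ and the Lipschitz constant $L$), one has $|g(t)|<\eta$ for every $t\in[0,T]$. Indeed $\eta\le\lambda b\le\lambda t$ whenever $t\ge b$, so $|g(t)|<\eta\le\lambda t$ on $[b,T]$, which is exactly the assertion. Thus the whole statement follows from trapping $g$ inside $(-\eta,\eta)$.

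First I would transfer the hypothesized closeness of $\phi_t(x)$ to $\phi_{h(t)}(y)$ into a rescaled self-closeness along the orbit of $y$. Since $\phi_{h(t)}(y)=\phi_{g(t)}(\phi_t(y))$, the triangle inequality gives $d(\phi_t(y),\phi_{g(t)}(\phi_t(y)))\le d(\phi_t(y),\phi_t(x))+d(\phi_t(x),\phi_{h(t)}(y))$, whose second term is $\le\vep\|X(\phi_t(x))\|$ by hypothesis. For the first term I would combine the Lipschitz bound $d(\phi_t(x),\phi_t(y))\le e^{Lt}d(x,y)$ with the $t=0$ instance $d(x,y)\le\vep\|X(x)\|$ and the comparison $\|X(x)\|\le e^{Lt}\|X(\phi_t(x))\|$, obtaining $d(\phi_t(x),\phi_t(y))\le\vep e^{2Lt}\|X(\phi_t(x))\|$. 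As $t\le T\le T_0$, summing yields $d(\phi_t(y),\phi_{g(t)}(\phi_t(y)))\le\vep(e^{2LT_0}+1)\|X(\phi_t(x))\|$. Finally, because the first-term bound already shows $\phi_t(x)$ and $\phi_t(y)$ are rescaled-close (once $\vep e^{2LT_0}\le c$), Lemma~\ref{2} lets me replace $\|X(\phi_t(x))\|$ by $2\|X(\phi_t(y))\|$, so that $d(\phi_t(y),\phi_{g(t)}(\phi_t(y)))\le\vep'\|X(\phi_t(y))\|$ with $\vep'=2\vep(e^{2LT_0}+1)$.

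With this in hand I would invoke the separation property. Applying Lemma~\ref{separation0} with the window $[\eta,T_0]$ produces $\gamma=\gamma_\eta>0$ such that $d(z,\phi_s(z))\ge\gamma\|X(z)\|$ for every regular $z$ and every $|s|\in[\eta,T_0]$; note that $\phi_t(y)$ is regular since the singular set is flow-invariant. Choosing $\vep$ so small that $\vep'<\gamma$, the self-closeness estimate forces $|g(t)|\notin[\eta,T_0]$ for all $t\in[0,T]$. To upgrade this to $|g(t)|<\eta$, I would use continuity: the sets $\{t:|g(t)|<\eta\}$ and $\{t:|g(t)|>T_0\}$ are open, disjoint, and (by the previous line) cover $[0,T]$, and the first contains $0$ since $g(0)=0$; by connectedness of $[0,T]$ the second is empty, so $|g(t)|<\eta$ throughout, completing the reduction.

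The main obstacle is the transfer estimate: one must pass from closeness measured against $\|X(\phi_t(x))\|$ to a self-closeness measured against $\|X(\phi_t(y))\|$, keeping every constant uniform in $x,y,T$ and valid in the presence of singularities. This is precisely where the rescaled comparison Lemma~\ref{2} and the rescaled separation Lemma~\ref{separation0} do the heavy lifting, since near a singularity ordinary (non-rescaled) estimates would degenerate. A secondary, more bookkeeping point is the reduction itself—recognizing that a constant bound $|g|<\eta$ automatically yields the proportional bound $|g(t)|\le\lambda t$ on $[b,T]$ because $t\ge b$—together with the continuity/connectedness trapping that rules out $|g(t)|>T_0$ without any monotonicity of $h$, which is what allows the argument to run for arbitrary $h\in C^0_0$.
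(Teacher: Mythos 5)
Your argument is correct and follows essentially the same route as the paper: the same triangle-inequality transfer to a rescaled self-closeness along the orbit of $y$ (via Lemma~\ref{2}), the same separation property, and the same continuity/connectedness trapping of $h(t)-t$ starting from $h(0)=0$. The only difference is a minor streamlining: the paper first traps $|h(t)-t|<b$ using the window $[b,T_0]$ and then refines to $|h(t)-t|\le 6(1+e^{2LT_0})\vep\le\lambda b$ via Lemma~\ref{3}, whereas you apply Lemma~\ref{separation0} directly on the finer window $[\eta,T_0]$ with $\eta=\min(\lambda b, T_0/2)$, which dispenses with Lemma~\ref{3} altogether.
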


\begin{proof}Let $L>0$ be a local Lipschitz constant of $X$.  By the continuous dependence of the solution on the initial value again,  for any regular points $x\in M\setminus{\rm Sing}(X)$ and $t\in \mathbb{R}$, we have $$e^{-L|t|}\|X(x)\|\leq\|X(\phi_{t}(x))\|\leq  e^{L|t|}\|X(x)\|.$$ Moreover, for $x,y\in M$, we have $e^{-L|t|}d(x,y)\leq d(\phi_{t}(x),\phi_{t}(y))\leq e^{L|t|}d(x,y)$. Let $T_0$ be the constant given in Lemma \ref{separation0}.  Fix $\lambda>0$ and $0<b<T_0$. Let $b$ be the constant $T$ in Lemma \ref{separation0}. Then there exists $\gamma_b$ such that 
$$d(\phi_s(x),x)\geq \gamma_b \|X(x))\|,$$ for any $s\in [b,T_0]$. 
Let $\vep=\min\{\frac{\gamma_b}{2(1+e^{2LT_0})}, \frac{\lambda b}{6(1+e^{2LT_0})}, e^{-2LT_0}c\}$, where $c$ is the constant given in Lemma \ref{2}.  Consider any two regular points $x,y\in M$ and $T\in[b,T_0]$ such that there exists $h\in C^0_0$ satisfying 
   $$d(\phi_t(x),\phi_{h(t)}(y))\leq \vep \|X(\phi_t(x))\|,$$
 for any $t\in[0,T]$.
  Now we show that $|h(t)-t|\leq \lambda t,$ for any $t\in[b,T].$

First of all, by the choice of $\vep$,  we have 
$$d(\phi_{h(t)}(y),\phi_{t}(y))\leq d(\phi_{h(t)}(y),\phi_{t}(x))+d(\phi_{t}(x),\phi_{t}(y)
\leq 2(1+e^{2LT_0})\vep\|X(\phi_{h(t)}(y))\|\leq \gamma_b\|X(\phi_{h(t)}(y))\|,$$
for any $t\in [b,T]$. Base on this, Lemma \ref{separation0} and the continuity of $|h(t)-t|$, we get that 
$$|h(t)-t|<b<T_0,$$ for any $t\in [b,T]$. Moreover, by Lemma \ref{3}, 
$$|h(t)-t|<6(1+e^{2LT_0})\vep\leq \lambda b<\lambda t,$$
for any $t\in [b,T]$. Hence the proof is complete. 
\end{proof}
Using Proposition~\ref{4}, one can extend the control on the reparametrizations to the time infinity. The proof is the same as Lemma 4 in \cite{WangWen} and thus we skip the details. 
\begin{Lem}\label{almostlinear}
Let $\phi$ be a flow generated by a $C^1$ vector field $X$ on a compact space $M$. There exists a constant $T_0$  such that for any $\lambda>0$ and $0<b<T_0$, there is $\vep>0$ such that   for any $x,y\in M\setminus {\rm Sing}(X)$  and any $T\in[b,+\infty)$ if there exists $h\in C^0_0$  such that $$d(\phi_t(x),\phi_{h(t)}(y))\leq \vep \|X(\phi_t(x))\|$$ for any $t\in[0,T]$
holds,
then $|h(t)-t|\leq \lambda t,$ for any $t\in[b,T].$
\end{Lem}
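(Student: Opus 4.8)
The plan is to bootstrap the finite-horizon estimate of Proposition~\ref{4} (which controls $|h(t)-t|$ only when $T\le T_0$) up to an arbitrary horizon by a chaining argument along the orbit. The decisive structural feature I will exploit is that the conclusion $|h(t)-t|\le\lambda t$ is \emph{proportional} to $t$: this is exactly what allows estimates obtained on consecutive windows to be spliced by the triangle inequality without any deterioration of the constant $\lambda$. Before starting, I note that it suffices to treat small $b$. Indeed, the conclusion for a smaller lower cutoff $b'\le b$ holds on the larger interval $[b',T]\supseteq[b,T]$ while the hypothesis is unchanged, so by running the argument with cutoff $\min(b,T_0/3)$ and using the resulting $\vep$ I may assume $b\le T_0/3$, hence $T_0>2b$. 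This harmless normalization is precisely what will make the restart times below admissible.

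Fix such $\lambda$ and $b$, and let $\vep$ be the constant produced by Proposition~\ref{4} for the pair $(\lambda,b)$. Fix any $T\ge b$, regular points $x,y$, and $h\in C^0_0$ with $d(\phi_t(x),\phi_{h(t)}(y))\le\vep\|X(\phi_t(x))\|$ on $[0,T]$. If $T\le T_0$ the claim is immediate from Proposition~\ref{4}, so assume $T>T_0$. The engine of the argument is a shift: for any $\tau\in[0,T]$ set $x'=\phi_\tau(x)$, $y'=\phi_{h(\tau)}(y)$ and $h'(r)=h(\tau+r)-h(\tau)$. Then $h'\in C^0_0$, and since $\phi_r(x')=\phi_{\tau+r}(x)$ one has $\|X(\phi_r(x'))\|=\|X(\phi_{\tau+r}(x))\|$, so the hypothesis transfers verbatim and \emph{with the same $\vep$}: $d(\phi_r(x'),\phi_{h'(r)}(y'))\le\vep\|X(\phi_r(x'))\|$ for $r\in[0,T-\tau]$. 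Applying Proposition~\ref{4} to $(x',y',h')$ therefore yields $|h(\tau+r)-h(\tau)-r|\le\lambda r$ for all $r\in[b,\min(T-\tau,T_0)]$.

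I then run a continuity/induction argument. Let $s^\ast$ be the supremum of those $s\in[b,T]$ for which $|h(t)-t|\le\lambda t$ holds throughout $[b,s]$; by Proposition~\ref{4} applied on $[0,T_0]$ we get $s^\ast\ge\min(T,T_0)=T_0$, and by continuity of $h$ the estimate holds at $s^\ast$ itself. Suppose for contradiction that $s^\ast<T$ and restart at $\tau=s^\ast-b$, which is admissible since $\tau\ge T_0-b\ge b$ (using $T_0\ge2b$). The inductive hypothesis gives $|h(\tau)-\tau|\le\lambda\tau$, while the shift estimate gives $|h(\tau+r)-h(\tau)-r|\le\lambda r$ for $r\in[b,\min(T-\tau,T_0)]$; adding these and writing $t=\tau+r$ produces $|h(t)-t|\le\lambda\tau+\lambda r=\lambda t$ on $[\tau+b,\,\tau+\min(T-\tau,T_0)]$. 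As $\tau+b=s^\ast$, this window abuts the previously controlled interval with no gap, and it either reaches $T$ (when $T-\tau\le T_0$) or advances $s^\ast$ by at least $T_0-b>0$. Hence after finitely many steps $s^\ast=T$, and the conclusion holds on all of $[b,T]$.

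The step I expect to be the main obstacle is guaranteeing that a \emph{single} $\vep$ suffices uniformly over the unbounded number of restarts. This is exactly where the proportional form of the estimate is essential: at every restart the shifted hypothesis holds with the \emph{same} $\vep$ (there is no loss in passing from $(x,y,h)$ to $(x',y',h')$, because the rescaling is by the speed along the $x$-orbit, which is invariant under the shift), and Proposition~\ref{4} returns the \emph{same} constant $\lambda$; the triangle-inequality splicing then reproduces $\lambda t$ exactly rather than accumulating into a growing multiple of $\lambda$. Managing the bookkeeping of the restart times so that each new window both connects to the previous controlled interval and makes definite forward progress — which is what forced the reduction to $b\le T_0/3$ — is the only genuinely delicate point; everything else reduces to the shift computation and an invocation of Proposition~\ref{4}.
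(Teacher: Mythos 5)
Your chaining argument is correct: the shift $(x,y,h)\mapsto(\phi_\tau(x),\phi_{h(\tau)}(y),h(\tau+\cdot)-h(\tau))$ preserves the hypothesis with the same $\vep$, and because the bound $\lambda t$ is proportional to $t$ the triangle inequality splices consecutive windows of length $\le T_0$ with no loss in the constant. The paper omits the proof of Lemma~\ref{almostlinear}, deferring to Lemma~4 of \cite{WangWen}, and your window-by-window bootstrap of Proposition~\ref{4} is precisely the standard argument that reference carries out.
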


Based on Proposition \ref{4} and Lemma \ref{almostlinear}, we obtain the following proposition, which tells us essentially $\Ga_\vep(x)$ and $\Ga_\vep^\al(x)$ are the same for regular points $x$. 
\begin{proposition}\label{cor_equal_balls}
   Let $\phi$ be a flow generated by a $C^1$ vector field $X$ on a compact space $M$. Then for every $\al \in (0,1)$ there exists $\vep_0>0$ such that for every $\vep \in (0,\vep_0]$ and $x \in M$ we have $\Ga_\vep(x) = \Ga_\vep^\al(x)$.
\end{proposition}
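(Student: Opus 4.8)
The plan is to establish the two inclusions separately, with essentially all the work in $\Ga_\vep(x)\subseteq\Ga_\vep^\al(x)$. One inclusion is free: since $Rep(\al)\subseteq C^0_0$, every reparametrization realizing $y\in\Ga_\vep^\al(x)$ also realizes $y\in\Ga_\vep(x)$, so $\Ga_\vep^\al(x)\subseteq\Ga_\vep(x)$ for all $\vep$, $x$ and $\al$. If $x$ is a singularity then $\phi_s(x)\equiv x$ and $\|X(\phi_s(x))\|\equiv 0$, so the defining inequality forces $\phi_{h(s)}(y)\equiv x$; evaluating at $s=0$ (where $h(0)=0$) gives $y=x$, whence both balls equal $\{x\}$, realized by $h=\id\in Rep(\al)$. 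Thus it remains to prove $\Ga_\vep(x)\subseteq\Ga_\vep^\al(x)$ for regular $x$.

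Fix $\al\in(0,1)$ and set $\la=\al$. Let $T_0$ be as in Lemma~\ref{almostlinear} and let $b\in(0,T_0)$ be a node spacing to be chosen; Proposition~\ref{4} and Lemma~\ref{almostlinear} then supply a threshold $\vep_0=\vep_0(\al,b)$. Given $y\in\Ga_\vep(x)$ with witness $h$, I would first use Lemma~\ref{increasing} and Corollary~\ref{c1} to replace $h$ by an increasing homeomorphism shadowing at a comparable radius. The structural input is the time-shift trick from the proof of Lemma~\ref{increasing}: applying Lemma~\ref{almostlinear} to the shifted data $\big(\phi_{nb}(x),\,\phi_{h(nb)}(y),\,s\mapsto h(s+nb)-h(nb)\big)$, valid for every $n\in\ZZ$ because the original shadowing holds on all of $\RR$, yields the per-segment bound $|h((n+1)b)-h(nb)-b|\le\la b$. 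I then let $\tilde h$ be the piecewise-linear interpolation of $h$ through the nodes $\{(nb,h(nb))\}_{n\in\ZZ}$. Every segment slope lies in $[1-\la,1+\la]$, and any difference quotient of a piecewise-linear map is a weighted average of its slopes, so $\tilde h$ satisfies the defining inequality of $Rep(\la)\subseteq Rep(\al)$; moreover $\tilde h(0)=h(0)=0$, and since all slopes are positive $\tilde h$ is an increasing homeomorphism of $\RR$.

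To check that $\tilde h$ still shadows, I would split $d(\phi_t(x),\phi_{\tilde h(t)}(y))\le d(\phi_t(x),\phi_{h(t)}(y))+d(\phi_{h(t)}(y),\phi_{\tilde h(t)}(y))$, where the first term is at most $\vep\|X(\phi_t(x))\|$ by hypothesis. The second equals $d(z,\phi_\sigma(z))$ with $z=\phi_{h(t)}(y)$ and $\sigma=\tilde h(t)-h(t)$; on the node interval containing $t$ one bounds $|\sigma|$ by combining the slope bound on $\tilde h$ with the local comparison $|h(t)-h(nb)-(t-nb)|=O(\vep)$, the latter obtained from the shadowing hypothesis together with Lemma~\ref{2} and Lemma~\ref{3}(1). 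Converting $d(z,\phi_\sigma(z))$ into a multiple of $\|X(\phi_t(x))\|$ via Lemma~\ref{2} and the exponential speed estimate $e^{-L|t|}\|X(x)\|\le\|X(\phi_t(x))\|\le e^{L|t|}\|X(x)\|$, and taking $b$ small, shows that $\tilde h$ shadows $x$ along $y$ at a controlled radius $\vep'$, so $y\in\Ga_{\vep'}^\al(x)$.

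The step I expect to be the main obstacle is recovering the \emph{same} radius $\vep$ rather than an inflated $\vep'>\vep$: regularizing $h$ to $\tilde h$ necessarily displaces the tracked point by a flow-time $\sigma$, and the slope-control threshold of Proposition~\ref{4} prevents shrinking $b$ without also shrinking $\vep$, so the interpolation argument only gives $\Ga_\vep(x)\subseteq\Ga_{K\vep}^\al(x)$ for a fixed $K>1$. To obtain equality I would instead build $\tilde h$ from the \emph{nearest-point} reparametrization: for each $t$ let $\tilde h(t)$ be the flow-time minimizing $\tau\mapsto d(\phi_t(x),\phi_\tau(y))$ near $h(t)$, which is well defined and unique inside the flowbox of Proposition~\ref{WW} once $\vep_0$ is small. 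This choice cannot increase the shadowing distance, so it keeps the radius at $\vep$ exactly; at the minimum the connecting geodesic is orthogonal to $X(\phi_{\tilde h(t)}(y))$, so by the implicit function theorem $\tilde h$ is $C^1$ with $\tilde h'(t)$ a ratio of speeds times the cosine of the (small) crossing angle, hence in $[1-\al,1+\al]$, and by the mean value theorem $\tilde h\in Rep(\al)$. The remaining delicate point is the normalization $\tilde h(0)=0$, since the nearest-point time at $t=0$ need not vanish, and reconciling this costs an $O(\vep)$ adjustment near $0$; it is precisely here that the sharp constant is at stake. I note finally that for the downstream applications the two-sided inclusion $\Ga_\vep^\al(x)\subseteq\Ga_\vep(x)\subseteq\Ga_{K\vep}^\al(x)$ already forces the families $\{\Ga_\vep(x)\}$ and $\{\Ga_\vep^\al(x)\}$ to define the same null sets, which is what Theorem~\ref{equivalent} requires.
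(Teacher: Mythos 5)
Your core construction coincides with the paper's: shift the reparametrization via $h_t(s)=h(s+t)-h(t)$, apply Lemma~\ref{almostlinear} to the shifted data to get $\left|\frac{h(s)-h(t)}{s-t}-1\right|\le\al$ whenever $s-t\ge b$, and replace $h$ by the piecewise-linear interpolation through the nodes $\{kb\}_{k\in\Z}$, whose difference quotients are convex combinations of the segment slopes and hence land in $Rep(\al)\cap Rep$. The obstacle you single out at the end --- that regularizing $h$ displaces the tracked point by a flow-time of order $\vep+b$, so the interpolated map is only seen to shadow at an inflated radius $K\vep$ --- is a genuine issue, and the paper's proof is silent on it: after checking the slope bounds it declares the proof complete without ever verifying the claimed inequality $d(\phi_t(x),\phi_{h'(t)}(y))\le\vep\|X(\phi_t(x))\|$ for the modified $h'$. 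Your observation that the two-sided inclusion $\Ga_\vep^\al(x)\subseteq\Ga_\vep(x)\subseteq\Ga_{K\vep}^\al(x)$ is all that Theorem~\ref{equivalent} actually needs is correct and is the safe way to use this statement; your nearest-point-projection variant is a reasonable attempt at recovering the sharp radius, though, as you note, the normalization $\tilde h(0)=0$ and behavior near $\mathrm{Sing}(X)$ make it delicate, and it is not the route the paper takes. Your separate treatment of singular $x$ (both balls reduce to $\{x\}$) is a detail the paper omits but the statement, quantified over all $x\in M$, requires.
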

\begin{proof}First of all, by definitions we know that $\Ga_\vep^\al(x)\subset \Ga_\vep(x)$. So, we only need to show the other direction, i.e., $\Ga_\vep(x) \subset \Ga_\vep^\al(x)$. For any $y\in  \Ga_\vep(x)$, let $h$ be in $C^0_0$ such that $$d(\phi_t(x),\phi_{h(t)}(y))\leq \vep \|X(\phi_t(x))\|,$$ for any $t\in\R.$ Now let's find $h'\in Rep(\al)$ such that 
$d(\phi_t(x),\phi_{h'(t)}(y))\leq \vep \|X(\phi_t(x))\|,$ for any $t\in\R.$

First of all, for any $t\in \R$, we define a new map $h_t(s):=h(s+t)-h(t)$. It follows that $h_t(0)=0$ and thus $h_t(s)\in C_0^0$ for any $t$. 
Moreover, we have
 $$d(\phi_s(\phi_{t}(x)),\phi_{h_t(s)}(\phi_{h(t)}(y)))= d(\phi_{s+t}(x),\phi_{h(s+t)}(y))
 \leq \vep \|X(\phi_{s+t}(x))\|=\vep\|X(\phi_s(\phi_t(x)))\|$$ for any $s\in\R.$  Apply Lemma \ref{almostlinear} to $h_t(s)$ and the constant $\al$, for any $0<b<T_0$, there is $\vep>0$ such that $|h_t(s)-s|\leq \al s,$ i.e.,
$|\frac{h_t(s)}{s}-1|\leq \al,$
for any $s\in[b,\infty).$ 
Take $s'=s-t$ with $s'=s-t\geq b$.  We have that 
$|\frac{h_t(s')}{s'}-1|\leq \al,$ which gives us
$$|\frac{h_t(s-t)}{s-t}-1|=|\frac{h(s)-h(t)}{s-t}-1|\leq \al, \text{ for any }s-t\geq b.$$

Secondly, let us modify $h$. Define a new function $h':\R\rightarrow \R$ as following:
\begin{enumerate}
\item $h'(kb)=h(kb)$, for all $k\in\Z$;
\item For any $t\in [kb,(k+1)b]$, $h'(t)$ is the linear map connecting $h'(kb)$ and $h'((k+1)b)$, i.e., 
$$h'(t)=(h((k+1)b)-h(kb))t/b+(1+k)h(kb)-kh((k+1)b).$$ 
\end{enumerate}
It is easy to see that $h'$ is still an increasing homeomorphism and $h'(0)=0$. 
Consider any two points $t=k_1b+s_1$ and $s=k_2b+s_2$, where $s_1,s_2\in [0,b)$. 
When $k_1=k_2$, we have
$$\frac{h'(s)-h'(t)}{s-t}=\frac{h((k_1+1)b)-h(k_1b)}{b}\leq \al.$$
By the choice of $\vep$, we have
$$|\frac{h((k+1)b)-h(kb)}{b}-1|\leq \al,$$ for any $k$. Moreover,  
when $k_1\neq k_2$, 
$$\frac{h'(s)-h'(t)}{s-t}\leq \frac{h'((k_2+1)b)-h'(k_1b)}{(k_2-k_1)b}\leq \frac{1}{k_2-k_1}\sum_{k_1}^{k_2}\frac{h'((i+1)b)-h'(ib)}{b}\leq \al.$$
The proof is complete. 
\end{proof}
\begin{corollary}\label{boost}If $\phi$ is a flow with $C^1$ vector field $X$ on $M$, then for every $\al \in (0,1)$ there exists $\vep_0>0$ such that for every $\vep \in (0,\vep_0]$ and $x \in M$ we have $\Ga^{\ast}_\vep(x) = \Ga_\vep^{\al,\ast}(x)$.
\end{corollary}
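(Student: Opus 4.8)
The plan is to obtain the identity from a short cycle of inclusions, reusing the machinery already assembled for Proposition~\ref{cor_equal_balls} rather than building anything new. Set $\vep_0$ to be the threshold furnished by Proposition~\ref{cor_equal_balls} for the given $\al$, and fix $\vep\in(0,\vep_0]$ and $x\in M$. Two of the three inclusions are immediate from the definitions: since $Rep(\al)\cap Rep\subset Rep$ we have $\Ga_\vep^{\al,\ast}(x)\subset\Ga_\vep^{\ast}(x)$, and since $Rep\subset C^0_0$ we have $\Ga_\vep^{\ast}(x)\subset\Ga_\vep(x)$. It therefore suffices to prove the single nontrivial inclusion $\Ga_\vep(x)\subset\Ga_\vep^{\al,\ast}(x)$; chaining the three together then closes the cycle and forces $\Ga_\vep^{\ast}(x)=\Ga_\vep^{\al,\ast}(x)$ (in fact all of $\Ga_\vep$, $\Ga_\vep^{\al}$, $\Ga_\vep^{\ast}$ and $\Ga_\vep^{\al,\ast}$ then coincide).

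For the nontrivial inclusion I would re-examine, rather than re-run, the construction in the proof of Proposition~\ref{cor_equal_balls}. Given $y\in\Ga_\vep(x)$, choose $h\in C^0_0$ realizing the bound $d(\phi_t(x),\phi_{h(t)}(y))\le\vep\|X(\phi_t(x))\|$ for all $t$. That proof applies Lemma~\ref{almostlinear} to the shifted maps $h_t(s)=h(s+t)-h(t)$ to extract the almost-linear control $|(h(s)-h(t))/(s-t)-1|\le\al$ for $s-t\ge b$, and then replaces $h$ by the piecewise-linear interpolant $h'$ pinned to $h$ at the grid points $\{kb\}_{k\in\Z}$. The key observation is that the resulting $h'$ is simultaneously in $Rep(\al)$ \emph{and} an increasing homeomorphism fixing $0$; the latter is explicitly recorded in that proof, so $h'\in Rep(\al)\cap Rep$. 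Hence the very reparametrization produced there already certifies $y\in\Ga_\vep^{\al,\ast}(x)$, which is exactly what we need.

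The one point that deserves care — and which I regard as the only genuine obstacle — is confirming that $h'$ still satisfies the shadowing bound with the same $\vep$, since membership in $\Ga_\vep^{\al,\ast}(x)$ requires it. Here the starred hypothesis actually simplifies matters: because $h$ is an increasing homeomorphism, for each $t\in[kb,(k+1)b]$ the value $h'(t)$ lies between $h(kb)$ and $h((k+1)b)$, so by the intermediate value theorem there is a unique $t'\in[kb,(k+1)b]$ with $h(t')=h'(t)$. Splitting
$$d(\phi_t(x),\phi_{h'(t)}(y))\le d(\phi_t(x),\phi_{t'}(x))+d(\phi_{t'}(x),\phi_{h(t')}(y))$$
exactly as in the closing estimate of Lemma~\ref{increasing}, the second term is controlled by the bound for $h$ while the first is negligible once $b$ is chosen small relative to $\vep$ using the uniform continuity of the flow along $[-b,b]$. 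This is the same mechanism already used in Section~3, so no new ideas are required; the corollary is a bookkeeping consequence of Proposition~\ref{cor_equal_balls} together with the monotonicity of $h$, which is now available for free.
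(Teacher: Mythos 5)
Your proof is correct and follows essentially the same route as the paper, which states Corollary~\ref{boost} without a separate proof as an immediate consequence of Proposition~\ref{cor_equal_balls}: the interpolant $h'$ built there is explicitly noted to be an increasing homeomorphism fixing $0$, so it already lies in $Rep(\al)\cap Rep$, and your cycle of inclusions $\Ga^{\al,\ast}_\vep(x)\subset\Ga^{\ast}_\vep(x)\subset\Ga_\vep(x)\subset\Ga^{\al,\ast}_\vep(x)$ then closes. Your additional check that $h'$ still satisfies the shadowing bound (via the intermediate value theorem and the splitting used at the end of Lemma~\ref{increasing}) addresses a step that the paper's proof of Proposition~\ref{cor_equal_balls} leaves implicit, so it is a welcome refinement rather than a deviation.
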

\subsection{Equivalent definitions of (positively) rescaled expansive measures}
In this subsection, we state the proof of Theorem \ref{equivalent}, which ensures that using $Rep(\al)\cap Rep$ in the definition of rescaled expansiveness, rather than the space of $C^0_0$
reparametrizations, imposes no additional restriction.
\begin{proof}[The proof of Theorem \ref{equivalent}]
   By the definitions, we know that $\Ga^{\al,\ast}_\vep(x)\subset \Ga_\vep(x)$ and hence \eqref{thmA_item1}$\Rightarrow$\eqref{thmA_item2}. We only need to prove that \eqref{thmA_item2}$\Rightarrow$\eqref{thmA_item1}. Fix any $\al\in(0,1)$. Assume that there exists $\vep>0$ such that $\mu(\Ga^{\al,\ast}_\vep(x))=0$ for $\mu$-almost every $x \in M$.  Denote the collection of these points as $A_{\vep}$. Due to the fact that $\mu(\Ga^{\al,\ast}_{\vep_1}(x))\leq  \mu(\Ga^{\al,\ast}_{\vep_2}(x))$ for any $\vep_1\leq \vep_2$, we can assume that $\vep$ is sufficiently small that it is smaller than the constant $\vep_0$ given by Proposition \ref{cor_equal_balls} and Corollary \ref{boost}. Hence we have $\Ga^{\al,\ast}_\vep(x) = \Ga^{\ast}_\vep(x)$ for every regular points $x \in M$.
 Arguing by contradiction,  assume that $\mu$ is not rescaled expansive, i.e., for any integers $n$, there exists a point $x_n\in M$ such that $\mu(\Ga_{1/n}(x_n))>0$. 
By the choice of $\vep$ and $A_{\vep},$  $\Ga_{1/n}(x_n) \cap A_\vep\neq\emptyset$. 
Hence, there exists $y_n\in \Ga_{1/n}(x_n)$ such that $\mu(\Ga^{\ast}_{\vep}(y_n))=0$. Let $\delta$ be the constant given in Corollary \ref{c1}. Let $n$ sufficiently large so that $1/n\leq \delta$. By Corollary \ref{c1}, 
$$\mu(\Gamma_{1/n}(x_n))\leq \mu(\Gamma_\delta(x_n))\leq \mu( \Gamma^{\ast}_\vep(y_n))=0,$$ which is a contradiction. 
\end{proof}

A stronger version of rescaled expansive measures can be similarly defined by just considering the positive direction. 
\begin{definition}\label{positive}
 For a flow $\phi$  generated by a $C^1$ vector field  $X$ on a compact manifold  $M$,  {\it the positive rescaled dynamical ball} centered in $x \in M$ with radius $\vep>0$ is defined as
 $$\Ga^{+}_\vep(x) = \{y \in M : \exists h \in \SC^0_0 \ \text{s.t.}\ d(\phi_s(x), \phi_{h(s)}(y))\leq \vep \|X(\phi_s(x))\| \enspace \forall s \in \R^{+}\}$$
 where $C^0_0$ is the set of continuous functions $h:\R\rightarrow\R$ with $h(0)=0.$
A Borel probability measure $\mu$ on $M$ is called {\it positive rescaled expansive} for the flow $\phi$ if there exists $\vep > 0$ such that $\mu(\Gamma^+_\vep(x)) = 0$ for every $x \in M$.
\end{definition}
It is easy to see that a positively rescaled expansive measure is always rescaled expansive. The following example tells us the fact that a rescaled expansive measure may not be positively rescaled expansive. 
\begin{example}Consider the two dimensional sphere with the classical map $f_2: z\mapsto z^2$. Let $\mu_{\delta}$ be the Lebesgue measure supported on the circle $S_{\delta}=\{z\vert \|z\|=\delta\}$. It is easy to see that 
$\mu_{\delta}$ is rescaled expansive, but not positively rescaled expansive as long as $\delta<1$.  
\end{example}

Using similar arguments, we obtain the following version of Theorem \ref{equivalent} for positive rescaled dynamical balls. 
\begin{theorem}\label{thmA_forward}
       Let $\phi$ be a flow with $C^1$ vector field $X$ on $M$ and $\mu$ is invariant. Then $\mu$ is positively rescaled expansive if and only if there exists $\vep>0$ such that $\mu(\Ga^+_\vep(x))=0$ for $\mu$-almost every $x \in M$.
\end{theorem}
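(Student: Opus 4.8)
The plan is to follow the scheme used for Theorem~\ref{equivalent}, adapting every ingredient to one-sided (forward) reparametrizations. The forward implication is immediate: if $\mu$ is positively rescaled expansive, then by Definition~\ref{positive} there is $\vep>0$ with $\mu(\Ga^+_\vep(x))=0$ for \emph{every} $x\in M$, hence in particular for $\mu$-almost every $x$. So the whole content is the converse: assuming there is $\vep>0$ with $\mu(\Ga^+_\vep(x))=0$ for $\mu$-almost every $x$, I must produce a (possibly smaller) $\vep'>0$ with $\mu(\Ga^+_{\vep'}(x))=0$ for \emph{all} $x$. I would set $A_\vep=\{x\in M:\mu(\Ga^+_\vep(x))=0\}$, so that $\mu(A_\vep)=1$, and shrink $\vep$ below the thresholds appearing in the forward lemmas below, using the monotonicity $\mu(\Ga^+_{\vep_1}(x))\le\mu(\Ga^+_{\vep_2}(x))$ for $\vep_1\le\vep_2$.

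The key tool I would first establish is the forward analogue of Corollary~\ref{c1}: there is $c>0$ such that for every $0<\vep\le c$ there is $\delta>0$ with $\Ga^+_\delta(x)\subseteq\Ga^+_\vep(y)$ for all regular $x,y$ with $y\in\Ga^+_\delta(x)$. This rests on a forward version of Lemma~\ref{increasing}, obtained by re-running that proof while keeping only $t\ge0$. The internal claim $h(T)>0$ uses the bound $d(\phi_t(y_n),\phi_{h_n(t)}(y_n))\le\gamma_Te^{-LT_0}\|X(\phi_t(y_n))\|$ only on $[0,T_0]$, together with the intermediate-value choice of $s\in(0,T]$ with $|h_n(s)-s|=T$; both survive the restriction to forward time and contradict Corollary~\ref{separation1} exactly as before. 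The shift $h'(t)=h(t+nT)-h(nT)$ still carries forward control for every $n\ge0$, giving $h((n+1)T)-h(nT)>0$ for all $n\ge0$, which lets me build the increasing piecewise-linear $\hat h$ on $[0,\infty)$ and verify $d(\phi_t(x),\phi_{\hat h(t)}(y))\le\vep\|X(\phi_t(x))\|$ for $t\ge0$ by the same estimate. Corollary~\ref{c1} then transfers with the triangle inequality, Lemma~\ref{2}, and the substitution $t\mapsto\hat h^{-1}(t)$, since a composition of increasing homeomorphisms of $[0,\infty)$ fixing $0$ is again one.

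With this in hand I would argue by contradiction. If $\mu$ is not positively rescaled expansive, then for each $n$ there is $x_n\in M$ with $\mu(\Ga^+_{1/n}(x_n))>0$; since $\mu(A_\vep)=1$, the set $\Ga^+_{1/n}(x_n)\cap A_\vep$ has positive measure, so I may pick $y_n\in\Ga^+_{1/n}(x_n)\cap A_\vep$, giving $\mu(\Ga^+_\vep(y_n))=0$. If $x_n$ is regular, then evaluating the defining inequality at $s=0$ shows $d(x_n,y_n)\le\frac1n\|X(x_n)\|$, so by Lemma~\ref{2} the point $y_n$ is regular for large $n$; the forward Corollary~\ref{c1} then yields $\Ga^+_{1/n}(x_n)\subseteq\Ga^+_\vep(y_n)$ once $1/n\le\delta$, whence $\mu(\Ga^+_{1/n}(x_n))\le\mu(\Ga^+_\vep(y_n))=0$, a contradiction. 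If instead $x_n\in\operatorname{Sing}(X)$, then $\|X(x_n)\|=0$ forces $\Ga^+_{1/n}(x_n)=\{x_n\}$, so $y_n=x_n\in A_\vep$ and $0<\mu(\Ga^+_{1/n}(x_n))=\mu(\{x_n\})=\mu(\Ga^+_\vep(x_n))=0$, again a contradiction. This proves the converse and hence the theorem.

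The step I expect to be the main obstacle is the forward Corollary~\ref{c1}, and within it the legitimacy of the substitution $t\mapsto\hat h^{-1}(t)$: it requires the boosted reparametrization $\hat h$ to be a homeomorphism of $[0,\infty)$ \emph{onto} $[0,\infty)$, i.e. proper in forward time, which the positivity of its increments alone does not guarantee. I would secure properness from the almost-linear control of Lemma~\ref{almostlinear}, which is already a forward statement: after shrinking $\vep$ it gives $|\hat h(t)-t|\le\lambda t$ with $\lambda<1$, so $\hat h(t)\ge(1-\lambda)t\to\infty$. The remaining care is bookkeeping—checking that each appeal to Lemma~\ref{separation0}, Corollary~\ref{separation1} and Lemma~\ref{3} uses only times in $[0,T_0]$, so that nothing in the original two-sided arguments secretly relied on backward control. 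Invariance of $\mu$ is available throughout; the only genuinely technical point it (or a direct measurability argument) must secure is that $x\mapsto\mu(\Ga^+_\vep(x))$ is measurable, so that $A_\vep$ is a bona fide full-measure set, which follows from the measurability of $\{(x,y):y\in\Ga^+_\vep(x)\}$ via Fubini.
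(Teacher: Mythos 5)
Your proposal is correct and follows exactly the route the paper intends: the paper gives no written proof of this theorem, saying only that it follows ``using similar arguments'' to Theorem~\ref{equivalent}, and your plan is precisely that adaptation --- forward versions of Lemma~\ref{increasing} and Corollary~\ref{c1}, then the same a.e.-to-everywhere contradiction argument. Your additional care about properness of $\hat h$ on $[0,\infty)$, singular centers, and measurability of $x\mapsto\mu(\Ga^+_\vep(x))$ goes beyond what the paper records but is consistent with (and tightens) its sketch.
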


\section{The lower bound of the decay rate of the rescaled Bowen balls}
This section was devoted to showing  the existence of the invariant rescaled expansive measures when the flow has positive entropy, i.e., the proof of Theorem \ref{entropythm}. The crucial step is to  establish an estimation on the lower bound of the decay rate of  Bowen balls. Different types of  Bowen balls for flows were introduced in several papers, such as, \cite{Sun, WangWen}, among others. Here we recall the rescaled Bowen balls defined in \cite{WangWen}.
\begin{Def}
Let  $\phi$ be a flow generated by a $C^1$ vector field $X$ on a compact manifold  $M$.
For any regular point $x\in M$, $t>0$ and $\vep>0$, the rescaled Bowen balls are defined to be
$$B_1^*(x,t,\vep)=\big\{y\in M|\: d(\phi_s(x),\phi_s(y))<\vep\|X(\phi_s(x))\|,~\textrm{for any $s\in[0,t]$}\big\}.$$
Similarly, one can consider the following rescaled Bowen balls:
$$B^*_2(x,t,\vep)=\big\{y\in M|\exists h\in Rep, s.t. d(\phi_{h(s)}(x),\phi_s(y))<\vep\|X(\phi_{h(s)}(x))\|,~\textrm{for any $s\in[0,t]$}\big\}$$
and
$$B^*_3(x,t,\vep)=\big\{y\in M|\exists h\in Rep, s.t. d(\phi_s(x),\phi_{h(s)}(y))<\vep\|X(\phi_s(x))\|,~\textrm{for any $s\in[0,t]$}\big\}.$$
\end{Def}
It is easy to see that $B_1^*(x,t,\vep)\subset B^*_2(x,t,\vep)$ and that $B_1^*(x,t,\vep)\subset B^*_3(x,t,\vep)$. Besides, the following lemmas tell us the further relation among these three rescaled Bowen balls.  
\begin{lemma}[Lemma 5 in \cite{WangWen}]\label{relation}Let  $\phi$ be a flow generated by a $C^1$ vector field $X$ on a compact manifold  $M$.  There exists a constant $T_0$ such that
given $\lambda>0$ and $b\in(0,T_0)$, there exists $\vep_0>0$ such that for any $x\in M\setminus {\rm Sing}(X)$, any $\vep\in(0, \vep_0]$ and any $t\geq b$, one has
$$B_3^*(x, t, \vep)\subset B_2^*(x, (1-\lambda)t, \vep) \text{ and }B_2^*(x, t, \vep)\subset B_3^*(x, (1-\lambda)t, \vep).$$
\end{lemma}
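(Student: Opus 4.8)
The plan is to prove both inclusions by the single device $g=h^{-1}$: if $y$ shadows $x$ with some $h\in Rep$ in one of the two forms, then the time-substitution $u=h(s)$ turns that shadowing relation into the other form, reparametrized by $g=h^{-1}\in Rep$, but now valid only over the image interval $[0,h(t)]$. The entire content of the lemma is therefore the quantitative estimate $h(t)\ge(1-\lambda)t$, which is precisely the \emph{almost identical} control of Lemma \ref{almostlinear} (the infinite-time version of Proposition \ref{4}). Accordingly I would first take $T_0$ to be the constant of Lemma \ref{almostlinear}, and for the given $\lambda$ and $b\in(0,T_0)$ choose $\vep_0$ small enough to feed into Lemma \ref{almostlinear} (with room for a factor $2$, explained below) and also smaller than the constant $c$ of Lemma \ref{2}.

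For the inclusion $B_3^*(x,t,\vep)\subset B_2^*(x,(1-\lambda)t,\vep)$, take $y\in B_3^*(x,t,\vep)$ with $h\in Rep$ satisfying $d(\phi_s(x),\phi_{h(s)}(y))<\vep\|X(\phi_s(x))\|$ on $[0,t]$. This is already exactly the hypothesis of Lemma \ref{almostlinear} (the un-reparametrized point $x$ carries the rescaling norm), so it yields $|h(s)-s|\le\lambda s$ on $[b,t]$, and in particular $h(t)\ge(1-\lambda)t$ since $t\ge b$. Substituting $u=h(s)$, $s=g(u)$ with $g=h^{-1}\in Rep$ rewrites the shadowing as $d(\phi_{g(u)}(x),\phi_u(y))<\vep\|X(\phi_{g(u)}(x))\|$ for $u\in[0,h(t)]$, which is the defining relation of $B_2^*$; as $h(t)\ge(1-\lambda)t$, it holds on all of $[0,(1-\lambda)t]$, giving $y\in B_2^*(x,(1-\lambda)t,\vep)$.

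The reverse inclusion $B_2^*(x,t,\vep)\subset B_3^*(x,(1-\lambda)t,\vep)$ is where the only genuine subtlety appears. Now $y\in B_2^*(x,t,\vep)$ comes with $h\in Rep$ such that $d(\phi_{h(s)}(x),\phi_s(y))<\vep\|X(\phi_{h(s)}(x))\|$ on $[0,t]$, in which the \emph{reparametrized} point $x$ carries the rescaling norm, so Lemma \ref{almostlinear} does not apply verbatim. The fix is to rebalance the rescaling: since $\vep<c$, Lemma \ref{2} gives $\|X(\phi_{h(s)}(x))\|\le 2\|X(\phi_s(y))\|$ (which also shows $y$ is regular, as needed at $s=0$), whence $d(\phi_s(y),\phi_{h(s)}(x))<2\vep\|X(\phi_s(y))\|$ on $[0,t]$. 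This is now in the form required by Lemma \ref{almostlinear} for the pair $(y,x)$ with constant $2\vep$, yielding $|h(s)-s|\le\lambda s$ on $[b,t]$ and hence $h(t)\ge(1-\lambda)t$. Substituting $u=h(s)$, $g=h^{-1}$ turns the original $B_2^*$ relation into $d(\phi_u(x),\phi_{g(u)}(y))<\vep\|X(\phi_u(x))\|$ on $[0,h(t)]\supseteq[0,(1-\lambda)t]$ — the defining relation of $B_3^*$, with the original constant $\vep$ — so $y\in B_3^*(x,(1-\lambda)t,\vep)$.

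The main obstacle is exactly this asymmetry in the rescaling convention between $B_2^*$ and $B_3^*$: the almost-identical control of Lemma \ref{almostlinear} is stated with the rescaling attached to the un-reparametrized orbit, which matches $B_3^*$ but not $B_2^*$. Dispatching it cleanly via Lemma \ref{2} — at the harmless cost of a factor $2$ absorbed by shrinking $\vep_0$ — is what makes the reverse inclusion work. The alternative of inverting first and then estimating $h(t)$ is less attractive, since applying the control at $u=h(t)$ would require knowing $h(t)\ge b$ in advance, which would itself need a separate separation argument (Corollary \ref{separation1}) to rule out that a long $y$-orbit shadows an arbitrarily short $x$-segment. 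Everything else reduces to the bookkeeping of the substitution $u=h(s)$ together with the observation that $h^{-1}\in Rep$.
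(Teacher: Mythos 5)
The paper does not actually prove this statement: it is imported verbatim as Lemma 5 of \cite{WangWen}, so there is no in-paper proof to compare against. Judged on its own, your argument is correct and is the natural reconstruction from the paper's own toolkit: the inversion $g=h^{-1}$ converts the $B_3^*$-type shadowing into the $B_2^*$-type one (and vice versa) over $[0,h(t)]$, and the whole quantitative content is the bound $h(t)\ge(1-\lambda)t$ supplied by Lemma \ref{almostlinear}. You correctly identify and dispatch the one real subtlety, namely that in $B_2^*$ the rescaling norm sits on the reparametrized orbit of $x$, so Lemma \ref{almostlinear} does not apply directly to the pair $(y,x)$; rebalancing via Lemma \ref{2} at the cost of a factor $2$ (absorbed into $\vep_0$, which you also keep below the constant $c$ of Lemma \ref{2} so that $y$ is forced to be regular) fixes this cleanly. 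Two small points worth making explicit if this were written out in full: in the first inclusion you should also note that $y$ is regular (again from Lemma \ref{2} at $s=0$) before invoking Lemma \ref{almostlinear}, and the lemma is of course only meaningful for $\lambda\in(0,1)$. Neither affects the validity of the argument.
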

The following lemma is not stated but essentially covered in  the proof of Lemma 6 in \cite{WangWen}. 
\begin{lemma}[Lemma 6 in \cite{WangWen}]\label{lemma: coverballs}Let  $\phi$ be a flow generated by a $C^1$ vector field on a compact manifold  $M$.  There exist constants $T_0, c, L$ such that for any $0<T<T_0$ and any $0<\delta<c$, there is $\vep>0$ such that for any $t\geq L\geq T$, we can find at most $3^{[\frac{t}{L}]}$ points $\{y_j\}$ such that
$$B_2^*(x,t,\vep)\subset \bigcup_{j}B_1^{\ast}(y_j, t, \delta).$$

\end{lemma}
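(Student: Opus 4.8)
The plan is to build the cover inductively over a decomposition of $[0,t]$ into $[t/L]$ consecutive windows of length $L$, showing that passing from one window to the next multiplies the number of synchronized $B_1^*$-balls by at most $3$. First I would fix $T_0$ as the common flowbox constant from Proposition \ref{WW}, Lemma \ref{3}, and Lemma \ref{separation0}, fix $c$ as the constant from Lemma \ref{2}, and fix $L\in(T,T_0)$; the final $\vep$ is chosen last, after the given $\delta<c$. The geometric heart of the argument is that over a single window the flowbox map $F_{\phi_{kL}(x)}$ is an embedding with $\tfrac13\le m(DF)\le\|DF\|\le 3$, so near the $x$-orbit the flow is a product of a transversal disc with the flow direction and all distances are distorted by a factor at most $3$.

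For $y\in B_2^*(x,t,\vep)$ with reparametrization $h\in Rep$, the defining inequality $d(\phi_{h(s)}(x),\phi_s(y))<\vep\|X(\phi_{h(s)}(x))\|$ says that $\phi_s(y)$ shadows the $x$-orbit up to the flow-phase $h(s)$. Taking $\vep$ small relative to $\lambda$ and $b=T$, Lemma \ref{almostlinear} gives $|h(s)-s|\le\lambda s$, so $h$ has slope close to $1$ and advances by approximately $L$ across each window; thus, once the position of $\phi_{kL}(y)$ is fixed inside a synchronized $\delta$-ball, over the next window $\phi_s(y)$ can only shadow a flow-phase interval of the $x$-orbit of length $\approx L$ with endpoint ambiguity of order $\lambda L$. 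Inside a single flowbox this whole range of admissible flow-phase positions is covered, after the factor-$3$ distortion, by at most $3$ synchronized $\delta$-balls $B_1^*(\cdot,L,\delta)$, provided $L$, $\delta$ and $\vep$ are calibrated so that the transversal $2\vep$-closeness is absorbed into $\delta$ (using Lemma \ref{2} to compare $\|X\|$ at nearby points) and Corollary \ref{separation1} guarantees that distinct flow-phases beyond the chosen resolution are genuinely $\delta$-separated.

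Carrying this out inductively, if $B_2^*(x,kL,\vep)$ is covered by $N_k$ balls $B_1^*(z,kL,\delta)$, then restricting each such ball to the next window and applying the per-window count yields $N_{k+1}\le 3N_k$, whence $N_{[t/L]}\le 3^{[t/L]}$, and the centers obtained at the final stage are the desired points $\{y_j\}$ with $B_2^*(x,t,\vep)\subset\bigcup_j B_1^*(y_j,t,\delta)$. The main obstacle is the calibration of constants: one must choose $L$, then $\delta<c$, then $\vep>0$ in exactly this order so that (i) the almost-linear estimate of Lemma \ref{almostlinear} holds uniformly on every window up to the arbitrarily large time $t$, and (ii) the per-window flow-phase ambiguity is pinned to at most $3$ synchronized $\delta$-balls rather than a larger constant — this is precisely where the flowbox derivative bounds $\tfrac13\le m(DF)\le\|DF\|\le 3$ must be balanced against the separation constant $\gamma$ and the ratio $L/\delta$.
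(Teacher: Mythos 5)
The paper itself does not prove this lemma --- it is imported wholesale from Lemma 6 of \cite{WangWen} --- so the only benchmark is that reference, and your sketch reconstructs essentially that argument: membership in a step-$k$ synchronized ball pins the cumulative phase $h(kL)$ to resolution $O(\delta)$, Lemma \ref{almostlinear} confines the increment $h((k+1)L)-h(kL)$ to an interval of length $2\lambda L$ around $L$, and after calibrating $\lambda\lesssim\delta/L$ and $\vep\ll\delta$ this yields at most $3$ admissible phase bins per window, hence $3^{[\frac{t}{L}]}$ itineraries, each contained in a single $B_1^{\ast}(y_j,t,\delta)$ by the triangle inequality together with Lemma \ref{2} and Lemma \ref{3}. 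Two points to tighten. First, the constant $3$ is not produced by the flowbox derivative bounds $\tfrac13\le m(D_pF_x)\le\|D_pF_x\|\le 3$ of Proposition \ref{WW}; those only enter the metric comparison fixing the ratio between $\vep$, $\delta$ and the phase resolution $\rho$, whereas the $3$ is the combinatorial count of consecutive bins of width $\rho\sim\delta$ that an interval of width $\rho+2\lambda L<2\rho$ can meet. Second, your induction is sound only because the step-$k$ balls control the \emph{cumulative} phase at a fixed resolution; discretizing the per-window increments independently into three classes would let the phase discrepancy between two points sharing an itinerary grow like $k\lambda L$, which cannot be absorbed into a fixed $\delta$ --- your phrase ``applying the per-window count'' flirts with that reading, so the cumulative-phase bookkeeping should be made explicit. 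Finally, the leftover window of length $t-[\frac{t}{L}]L<L$ must be absorbed without an extra branching factor (its additional drift already fits in the last bin's tolerance), or the bound degrades to $3^{[\frac{t}{L}]+1}$.
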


\begin{lemma}\label{rescaled123} 
Let  $\phi$ be a flow generated by a $C^1$ vector field on a compact manifold  $M$ with a Borel probability measure $\mu$. Then, for any $x\in M$, we have
$$\lim_{\vep \to 0} \limsup_{t \to \infty} \frac{-\log(\mu(B_3^*(x, t, \vep)))}{t}= \lim_{\vep \to 0} \limsup_{t \to \infty} \frac{-\log(\mu(B_2^*(x, t, \vep)))}{t}\leq \lim_{\vep \to 0} \limsup_{t \to \infty} \frac{-\log(\mu(B_1^*(x, t, \vep)))}{t}.$$
Similar results also hold for $\lim_{\vep \to 0} \liminf_{t \to \infty}$.
\end{lemma}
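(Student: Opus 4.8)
The plan is to deduce the entire chain from two facts already in hand: the elementary inclusion $B_1^*(x,t,\vep)\subset B_2^*(x,t,\vep)$ (take $h=\mathrm{id}$ in the definition of $B_2^*$) and the two-sided comparison of Lemma \ref{relation}; the covering Lemma \ref{lemma: coverballs} is not needed for this statement, being reserved for the entropy estimate behind Theorem \ref{entropythm}. First I would dispose of singular $x$: there $\|X(\phi_s(x))\|\equiv 0$, so every defining inequality $d(\cdot,\cdot)<\vep\|X(\phi_s(x))\|$ is vacuous, all three balls are empty, each decay quantity equals $+\infty$, and the asserted $+\infty=+\infty\le+\infty$ holds trivially. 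It then remains to treat a regular $x$, to which Lemma \ref{relation} applies.

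Next I would set up the limits cleanly. Fix $x$ and write $D_i^+(\vep)=\limsup_{t\to\infty}\frac{-\log\mu(B_i^*(x,t,\vep))}{t}$ and $D_i^+=\lim_{\vep\to 0}D_i^+(\vep)$. The outer limit is legitimate by monotonicity in $\vep$: for fixed $t$, shrinking $\vep$ shrinks $B_i^*(x,t,\vep)$, hence lowers $\mu$ and raises $-\log\mu$, so $D_i^+(\vep)$ is non-increasing in $\vep$ and $\lim_{\vep\to0}D_i^+(\vep)=\sup_{\vep>0}D_i^+(\vep)$ exists in $[0,+\infty]$. Granting this, the inequality $D_2^+\le D_1^+$ is immediate: $B_1^*\subset B_2^*$ gives $\mu(B_1^*(x,t,\vep))\le\mu(B_2^*(x,t,\vep))$, hence $\frac{-\log\mu(B_2^*(x,t,\vep))}{t}\le\frac{-\log\mu(B_1^*(x,t,\vep))}{t}$, and passing to $\limsup_t$ and then $\vep\to0$ preserves it.

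The core is the equality $D_2^+=D_3^+$, which I would extract from Lemma \ref{relation}. Fix $\lambda>0$ and $b\in(0,T_0)$ and let $\vep_0=\vep_0(\lambda,b)$ be the constant it furnishes. For $\vep\in(0,\vep_0]$ the inclusion $B_3^*(x,t,\vep)\subset B_2^*(x,(1-\lambda)t,\vep)$ yields, after taking $-\log$, dividing by $t$, and setting $s=(1-\lambda)t$,
$$\frac{-\log\mu(B_3^*(x,t,\vep))}{t}\ \ge\ (1-\lambda)\,\frac{-\log\mu(B_2^*(x,(1-\lambda)t,\vep))}{(1-\lambda)t}.$$
Taking $\limsup_{t\to\infty}$, and using that $t\mapsto(1-\lambda)t$ is an increasing bijection of $(0,\infty)$, gives $D_3^+(\vep)\ge(1-\lambda)D_2^+(\vep)$; letting $\vep\to0$ gives $D_3^+\ge(1-\lambda)D_2^+$, and finally $\lambda\to0^+$ gives $D_3^+\ge D_2^+$. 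The symmetric inclusion $B_2^*(x,t,\vep)\subset B_3^*(x,(1-\lambda)t,\vep)$ yields $D_2^+\ge D_3^+$ by the identical computation, so $D_2^+=D_3^+$, completing the $\limsup$ statement.

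For the $\liminf$ version I would run the same computation verbatim, replacing $\limsup_t$ by $\liminf_t$ throughout and using that $f\ge g$ eventually implies $\liminf_t f\ge\liminf_t g$, that the constant factor $(1-\lambda)$ pulls out of a $\liminf$, and that the increasing bijection $t\mapsto(1-\lambda)t$ again preserves the $\liminf$ as $t\to\infty$. The hard part is not any single estimate but the bookkeeping of the three nested passages to the limit — the inner $\limsup_t$ (or $\liminf_t$), the outer $\lim_{\vep\to0}$ (justified by the monotonicity in $\vep$ above), and the auxiliary $\lambda\to0$ — together with the time-rescaling factor $(1-\lambda)$ that Lemma \ref{relation} injects into the estimate; the order must be respected so that the dependence $\vep_0=\vep_0(\lambda,b)$ does not obstruct sending $\vep\to0$ before $\lambda\to0$.
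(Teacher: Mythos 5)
Your proposal is correct and follows essentially the same route as the paper, which derives the equality of the $B_2^*$ and $B_3^*$ decay rates directly from the two-sided inclusions of Lemma \ref{relation} and the inequality from the elementary inclusion $B_1^*(x,t,\vep)\subset B_2^*(x,t,\vep)$; you merely spell out the bookkeeping (the $(1-\lambda)$ time-rescaling, the order of the limits in $t$, $\vep$, $\lambda$, and the monotonicity in $\vep$) that the paper leaves implicit. Your separate treatment of singular $x$, where all three balls are empty and every quantity is $+\infty$, is a small but legitimate extra care, since Lemma \ref{relation} is stated only for regular points while the lemma claims the identity for all $x\in M$.
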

\begin{proof}The first equality is a direct corollary from Lemma \ref{relation}. The second inequality follows directly from the definitions since $B_1^*(x,t,\vep)\subset B^*_2(x,t,\vep)$.
\end{proof}

The following theorem gives us a lower bound of the decay rate of the rescaled Bowen balls. The classical Shannon–McMillan–Breiman theorem and Borel-Cantelli Lemma play a crucial role in establishing the relationship between the decay rate of rescaled Bowen balls and the entropy. 
\begin{theorem}\label{11}
 Let  $\phi$ be a flow generated by a $C^1$ vector field $X$ on a compact manifold  $M$ with a Borel probability measure $\mu$ such that $\mu( {\rm Sing}(X))=0$. If $\mu$ is ergodic, we have 
$$\lim_{\vep \to 0} \liminf_{t \to \infty} \frac{-\log(\mu(B_i^*(x, t, \vep)))}{t} \geq h_{\mu}(\phi),$$ where $i=1,2$ or $3$. 
\end{theorem}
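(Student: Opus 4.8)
The plan is to prove the bound for the smallest of the three decay rates and then deduce the rest. By Lemma~\ref{rescaled123} in its $\liminf$ form,
\[
\lim_{\vep\to0}\liminf_{t\to\infty}\frac{-\log\mu(B_3^*(x,t,\vep))}{t}=\lim_{\vep\to0}\liminf_{t\to\infty}\frac{-\log\mu(B_2^*(x,t,\vep))}{t}\le\lim_{\vep\to0}\liminf_{t\to\infty}\frac{-\log\mu(B_1^*(x,t,\vep))}{t},
\]
so it suffices to establish the estimate for $i=2$; the cases $i=1,3$ then follow automatically. Throughout I write $C=\sup_{z\in M}\|X(z)\|<\infty$, finite by compactness of $M$ and continuity of $X$. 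The role of $C$ is that every rescaled Bowen ball sits inside an ordinary metric Bowen ball of the time-one map $\phi_1$: from $d(\phi_s(x),\phi_s(y))<\delta\|X(\phi_s(x))\|\le\delta C$ we get $B_1^*(x,t,\delta)\subseteq B(x,t,\delta C)$, where $B(x,t,r)=\{y:d(\phi_s(x),\phi_s(y))<r\ \forall s\in[0,t]\}$. This reduction is what makes the Shannon–McMillan–Breiman theorem for $\phi_1$ applicable, and it is insensitive to singularities since it uses only the uniform bound $C$.

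Next I would invoke Lemma~\ref{lemma: coverballs} to remove the reparametrization from $B_2^*$. Given $\eta>0$, fix the window length $L$ in that lemma so large that $\tfrac{\log 3}{L}<\eta$; then for suitable $\delta$ and $\vep$ the lemma yields points $y_1,\dots,y_{3^{[t/L]}}$ with
\[
B_2^*(x,t,\vep)\subseteq\bigcup_{j}B_1^*(y_j,t,\delta)\subseteq\bigcup_{j}B(y_j,t,\delta C),
\]
so $\mu(B_2^*(x,t,\vep))\le\sum_{j}\mu(B(y_j,t,\delta C))$. The combinatorial factor $3^{[t/L]}$ contributes only $\tfrac{\log 3}{L}<\eta$ to the exponential rate, which is precisely why taking $L$ large is essential. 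The problem is thereby reduced to an upper bound of the form $\mu(B(y,t,\delta C))\le e^{-(h_\mu(\phi)-\eta)t}$ for the ordinary Bowen balls on the right.

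This upper bound is the SMB–Borel–Cantelli heart of the argument. I would choose a finite partition $\mathcal P$ of $M$ with $\operatorname{diam}\mathcal P<\delta C$, with $\mu(\partial\mathcal P)=0$, and with $h_\mu(\phi_1,\mathcal P)>h_\mu(\phi)-\eta$ (possible since $h_\mu(\phi)=h_\mu(\phi_1)$). By SMB, for $\mu$-a.e.\ point the atom of $\bigvee_{i=0}^{[t]}\phi_1^{-i}\mathcal P$ has measure $e^{-([t]+1)(h_\mu(\phi)-\eta)(1+o(1))}$; and by Birkhoff's theorem applied to the indicator of a $\gamma$-neighborhood $\partial_\gamma\mathcal P$ of the boundary (whose measure tends to $0$ as $\gamma\to0$) together with Borel–Cantelli, for a.e.\ point the $\phi_1$-orbit meets $\partial_\gamma\mathcal P$ with frequency close to $\mu(\partial_\gamma\mathcal P)$. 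Consequently a metric Bowen ball $B(\cdot,t,\delta C)$ with $\delta C<\gamma$ straddles partition boundaries at only a subexponential density of times, hence meets at most $e^{o(t)}$ atoms of generic (thus exponentially small) measure; this gives $\mu(B(y,t,\delta C))\le e^{-(h_\mu(\phi)-2\eta)t}$ off a controlled exceptional set. Combining with the covering count yields $\mu(B_2^*(x,t,\vep))\le e^{-(h_\mu(\phi)-3\eta)t}$ for large $t$, and a Borel–Cantelli argument along integer times together with the monotonicity of $t\mapsto B_2^*(x,t,\vep)$ upgrades this to the a.e.\ $\liminf$ statement; letting $\vep\to0$ and then $\eta\to0$ finishes the proof. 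The hypothesis $\mu({\rm Sing}(X))=0$ enters only to guarantee that the rescaling factors $\|X(\phi_s(x))\|$ are positive along $\mu$-a.e.\ orbit, so that all the rescaled balls are genuinely defined.

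The step I expect to be the main obstacle is the \emph{uniformity} of the Brin–Katok upper bound over the covering centers $y_j$: these points are produced by Lemma~\ref{lemma: coverballs}, depend on $(x,t,\vep)$, and are in no way $\mu$-generic, so one cannot simply quote the pointwise SMB estimate at each $y_j$. I would handle this by phrasing the estimate as a bound on the measure of the exceptional set $A_t=\{y:\mu(B(y,t,\delta C))>e^{-(h_\mu(\phi)-2\eta)t}\}$, shown to be small by the standard separated-set counting (a maximal $(t,2\delta C)$-separated subset of $A_t$ produces pairwise disjoint Bowen balls, forcing $A_t$ to have small measure), and then splitting $\sum_j\mu(B(y_j,t,\delta C))$ according to whether $y_j\in A_t$ or not. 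The centers outside $A_t$ contribute at most $3^{[t/L]}e^{-(h_\mu(\phi)-2\eta)t}$, while those inside $A_t$ must be absorbed using that the window factor $3^{[t/L]}$ is subexponential (for $L$ large) against the smallness of $A_t$. Making this absorption quantitative, and reconciling it with the order of the limits in $t$, $\vep$, and $\eta$, is the most delicate point of the argument.
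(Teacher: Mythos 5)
Your reduction to the case $i=2$ via Lemma~\ref{rescaled123}, and your use of Lemma~\ref{lemma: coverballs} with $L$ large so that the covering multiplicity $3^{[t/L]}$ costs only $\log 3/L$ in exponential rate, match the paper's strategy, and you have correctly located the crux: the covering centers $y_j$ are not $\mu$-generic, so a pointwise SMB or Brin--Katok estimate cannot be quoted at them. The problem is that your proposed repair does not close this gap. The separated-set argument for $A_t=\{y:\mu(B(y,t,\delta C))>e^{-(h_\mu(\phi)-2\eta)t}\}$ only bounds the \emph{cardinality} of a maximal $(t,2\delta C)$-separated subset of $A_t$ by $e^{(h_\mu(\phi)-2\eta)t}$; to convert that into a bound on $\mu(A_t)$ you must multiply by $\sup_y\mu(B(y,t,2\delta C))$, which is precisely the uniform upper bound you are trying to establish --- the argument is circular. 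Moreover, that uniform bound is false in general: Bowen balls centered at non-generic points (near periodic orbits or singularities, say) need not decay at rate $h_\mu(\phi)$, so even a single center $y_j\in A_t$ can contribute a term to $\sum_j\mu(B(y_j,t,\delta C))$ that the subexponential factor $3^{[t/L]}$ cannot absorb. The Birkhoff-frequency argument for boundary crossings of the partition likewise requires genericity of the center and is unavailable here.

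The paper closes this gap with a device your outline is missing: it never estimates the measure of a Bowen ball at a covering center. It builds a finite partition $\xi=\{A_1,\dots,A_m,A_{m+1}\}$ whose first $m$ atoms are compact with pairwise distance at least $\delta_0>0$, takes $\delta_1$ so small that $\delta_1\|X\|<\delta_0/2$ everywhere, and observes that \emph{every} ball $B_1^*(y,n\tau L,\delta_1)$ --- generic center or not --- meets at most $2^n$ atoms of $\bigvee_{i=0}^{n-1}\phi_{\tau L}^{-i}\xi$, since at each sampling time all points of the ball lie within $\delta_0/2$ of a common point and hence can occupy at most one of $A_1,\dots,A_m$ (plus possibly $A_{m+1}$). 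Combined with the $3^{n\tau}$ covering count, at most $3^{n\tau}2^n$ atoms meet $B_2^*(x,n\tau L,\vep)$, for all $x$. The SMB theorem is then applied only to the atoms themselves, and Borel--Cantelli is run on the set $G_n$ of \emph{base points} $x$ whose ball is too large: such an $x$ forces one of those at most $3^{n\tau}2^n$ atoms to be abnormally large, so the union of atoms meeting $G_n$ has measure summable in $n$. Replacing your $A_t$ step by this combinatorial atom-counting is what is needed to make the rest of your outline go through.
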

\begin{proof}According to Lemma \ref{relation} and Lemma \ref{rescaled123}, we only need to prove the claim for  $B_2^*(x, t, \vep)$.  We only state the proof for the case when $h_{\mu}(\phi)$ is finite and the case for the entropy to be infinite is similar.  First of all, since $\mu$ is an ergodic measure for the flow, it is also an ergodic measure for the time $\tau$  map $\varphi_\tau$ of the flow except countably many time. Hence, we can choose $\tau\neq 0$ such that $\mu$ is a $\phi_{n\tau}$ ergodic measure and the same for the iterations $\phi_{n\tau}$, for any $n\in\N$. 

Let $L\in \N$ such that $L\geq T_0,$ where $T_0$ is given in Lemma  \ref{lemma: coverballs}. 
Let $\delta>0$ be an arbitrary small number. By the definition of measure-theoretic entropy, there exists an arbitrary finite measurable partition  $\tilde{\xi}=\{B_{1},\ldots,B_{m}\}$ on $M$ such that 
$$H_{\mu}(\phi_{\tau L}, \tilde{\xi})\leq h_{\mu}(\phi_{\tau L})+\delta/2,$$
where $H$ indicates the classical definition of measure-theoretic entropy for a partition.  

Next let us a classical approximation to this partition using compact subsets. For any $0<\vep'<\frac{\delta}{2m\log m}$, there is a compact subset $A_{j}\subset B_{j}$ such that $\mu(B_{j}\setminus A_{j})<\vep'$ for any $1\leq j\leq m$. Define a new finite measurable partition as $\xi=\{A_{1},\ldots,A_{m},A_{m+1}\}$, where $A_{m+1}$ is the complementary of $\bigcup_{i=1}^{m}A_{i}$. Then
$$H_{\mu}(\varphi_{\tau L},\tilde{\xi})\leq H_{\mu}(\varphi_{\tau L},\xi)+\vep ' m\ln m\leq h_{\mu}(\phi_{\tau L})+\delta.$$
Since $A_1, A_2, \cdots, A_m$ are all compact, the minimal distance between each two of them, namely,   $$\delta_0=\min\{d(x,y)| x\in A_i, y\in A_j, 1\leq i\neq j\leq m\}$$
is positive. 

Let $\delta_1,\delta_2, \delta_3$ be four numbers such that $\delta_1\leq C\delta_0/2$, where $C=\sup_{x\in M}\|X(x)\|$, $0<\delta_2<\delta_3.$

By Lemma \ref{lemma: coverballs}, 
for $\delta_1$,  there is $\vep>0$ such that for sufficiently any large $L$ and $t=nL\tau \geq L$, we can find at most $3^{n\tau}$ points $\{y_j\}$ such that
$$B_2^*(x,n\tau L,\vep)\subset \bigcup_{j}B_1^{\ast}(y_j, n\tau L, \delta_1),$$
  for all regular $x$ and any $n\geq \frac{1}{\tau} \in \mathbb{N}$.

Denote by
$$\xi_{n}=\xi\vee \varphi_{L\tau}^{-1}\xi\vee\cdots\vee\varphi_{L\tau}^{-n+1}\xi.$$

We claim that for any rescaled Bowen ball $B_1^*(x, n\tau L, \delta_1)$, there exists at most $2^n$ many different $P\in \xi_{n},$
such that $P\cap B_1^*(x, n\tau L, \delta_1)\neq\emptyset.$ 
Arguing by contradiction, assume that there are more than $2^n$ such $P$. Then, there exists two different subsets  $P_1, P_2\in \xi_{n}$, such that 
$$P_1\cap B_1^*(x, n\tau L, \delta_1)\neq\emptyset, P_2\cap B_1^*(x, n\tau L, \delta_1)\neq\emptyset,$$
and  there exists $k$ such that
$\phi^{k\tau L}(P_1)\subset A_{j_1}$ and $\phi^{k\tau L}(P_2)\subset A_{j_  2}$ while $j_1\neq j_2$ and $j_1,j_2\in[1,m].$
Take any two points $y_1\in P_1\cap B_1^*(x, n\tau L, \delta_1)$ and $y_2\in P_2\cap B_1^*(x, n\tau L, \delta_1)$. It follows that
$\phi_{k\tau L}(y_1)\in A_{j_1}$ and $\phi_{k\tau L}(y_2)\in A_{j_2}$. By the definition of $B_1^*(x, n\tau L, \delta_1)$, 
one has
$$d(\phi_{k\tau L}(x), \phi_{k\tau L}(y_1))<\delta_1\|X(\phi_{k\tau L}(x))\|<\delta_0/2, $$
$$ d(\varphi_{k\tau L}(x), \phi_{k\tau L}(y_2))<\delta_1\|X(\phi_{k\tau L}(x))\|<\delta_0/2.$$
Thus we have $d(\phi_{k\tau L}(y_1), \phi_{k\tau L}(y_2))<\delta_0$, contradicting with the choice of $\delta_0$. Hence we finish the claim. 
 
By the above claim and the choice of $\vep$, there are at most $3^{n\tau}\cdot 2^{n}$ many $P\in \xi_{n},$
such that $$P\cap B_2^*(x, n\tau L, \vep)\neq\emptyset.$$ 

Define $\xi_{n}(x)$ as the element that contains $x$ in the partition $\xi_{n}$. By Shannon-McMillan-Breiman theorem, we have
 $$\lim\limits_{n\to \infty}\frac{1}{n}\log\mu(\xi_{n}(x))=H_{\mu}(\phi_{\tau L},\xi),$$ for  a.e. $x\in M.$ From this we get that for $\delta$ and $\delta_2$, there exists an integer $N$ such that 
 $\mu(E_N)>1-\delta,$ where
$$E_N=\{x\in M\vert \mu(\xi_n(x))\leq e^{-n(H_{\mu}(\phi_{\tau L},\xi)-\delta_2)}, \forall n\geq N\}.$$ 

 Moreover, for $\delta_3$, let's define the following subset
$$G_n=\{x\in E_N\vert \mu(B_2^*(x, n\tau L, \vep))>3^{2n\tau}\cdot 2^{2n} e^{-n(H_{\mu}(\phi_{\tau L},\xi)-\delta_3)}
\},$$
which is the collection of bad points that we want to avoid. We want to control the measure of this subset up to an exponentially small scale. 
For any $x\in G_n$, consider the subset $P\in \xi_{n},$
such that $$P\cap B_2^*(x, n\tau L, \vep)\neq\emptyset.$$ 
By the claim above, we know that there are at most  $3^{n\tau}\cdot 2^{n}$ such $P$. 
Since $x\in G_n$, we have $$ \mu(B_2^*(x, n\tau L, \vep))>3^{2n\tau}\cdot 2^{2n} e^{-n(H_{\mu}(\phi_{\tau L},\xi)-\delta_3)}.$$
This tells us there are at least one of the $P$ with non-empty intersection with $B_2^*(x, n\tau L, \vep)$ having measure greater than $3^{n\tau}\cdot 2^{n}e^{-n(H_{\mu}(\phi_{\tau L},\xi)-\delta_3)}$. Since $\mu$ is a probability measure, the number of such $P$ can not exceed 
  $3^{-n\tau}\cdot 2^{-n}e^{n(H_{\mu}(\phi_{\tau L},\xi)-\delta_3)}$.

Define $K_n$ to be the total number of $P\in \xi_{n},$
such that $P\cap G_n\neq\emptyset.$ Then, we have the following control 
$$K_n\leq 3^{n\tau}\cdot 2^{n}\cdot 3^{-n\tau}\cdot 2^{-n}e^{n(H_{\mu}(\phi_{\tau L},\xi)-\delta_3)}\leq e^{n(H_{\mu}(\phi_{\tau L},\xi)-\delta_3)}.$$
Denote by $Y_n$ to be the union of $P\in \xi_{n}$
such that $P\cap G_n\neq\emptyset.$   By the definition of $E_N$, we have the following control on the measure of $Y_n$:
$$\mu(Y_n)\leq K_n\cdot e^{-n(H_{\mu}(\phi_{\tau L},\xi)-\delta_2)}\leq e^{n(H_{\mu}(\phi_{\tau L},\xi)-\delta_3)}\cdot e^{-n(H_{\mu}(\phi_{\tau L},\xi)-\delta_2)}\leq e^{(\delta_2-\delta_3)n}.$$
Since $G_n\subset Y_n$, it follows that $$\mu(G_n)\leq e^{(\delta_2-\delta_3)n},$$ for $n\geq N$, which decays exponentially fast. Hence  $\sum_{n\geq N}^{\infty}\mu(G_n)\leq \infty.$
Finally, by Borel-Cantelli Lemma, for $\mu$-a.e. $x\in E_N$, 
$$\lim_{\vep \to 0} \liminf_{n \to \infty} \frac{-\log(\mu(B_2^*(x, n\tau L, \vep)))}{n\tau L}+\frac{\delta_3+\log 2+\tau\log 3 }{L\tau} \geq \frac{1}{L\tau}H_{\mu}(\phi_{\tau L},\xi)\geq h_{\mu}(\phi)+\frac{\delta}{L\tau} .$$
By the arbitrary choice of the constants, we obtain that  
 for $\mu$-a.e. $x\in E_N$, 
$$\lim_{\vep \to 0} \liminf_{n \to \infty} \frac{-\log(\mu(B_2^*(x, n\tau L, \vep)))}{n\tau L}\geq h_{\mu}(\phi).$$
Since $\eta$ is arbitrary and $\delta$ is arbitrary, it follows that
for $\mu$-a.e. $x\in M$
$$\lim_{\vep \to 0} \liminf_{n \to \infty} \frac{-\log(\mu(B_2^*(x, n\tau L, \vep)))}{n\tau L}\geq h_{\mu}(\phi).$$
Taking the following relation into account, 
$$B_2^*(x, (n+1)\tau L, \vep)\subset B_2^*(x, t, \vep)\subset B_2^*(x, n\tau L, \vep), \forall t\in [n\tau L,(n+1)\tau L],$$
we get that
$$\lim_{\vep \to 0} \liminf_{t \to \infty} \frac{-\log(\mu(B_2^*(x, t, \vep)))}{t} \geq h_{\mu}(\phi),$$
for $\mu$-a.e. $x\in M$, 
which completes the proof. 
\end{proof}

Now we are ready to prove Theorem \ref{entropythm}.     
\begin{proof}[The proof of Theorem \ref{entropythm}.]  Let $\mu \in \SM_\phi(X)$ be an ergodic measure. Arguing by contradiction, suppose that $\mu$ is not rescaled expansive, which implies that for any $\vep>0$, there exists a subset $K_{\vep}$ with $\mu(K_{\vep})>0$ such that for any $x\in K_{\vep}$, 
    $$\mu(\Gamma_{\vep}(x))>0.$$
   By Theorem \ref{11}, there exists a Borel set $H\subset X$ with $\mu(H)=1$ such that for every $x \in H$ it holds that $$\lim\limits_{\vep\to 0}\liminf\limits_{t\to \infty}\frac{-\log(\mu(B_i^*(x, t, \vep))}{t}\geq h_{\mu}(\phi)>0.$$
     Since $\mu(K_{\vep})>0$,  it follows that $H\cap K_\vep \neq \emptyset$. We note here that 
    $\Gamma_{\vep}(x)\subset \Gamma^{\ast}_{\vep_1}(x) \subset B_i^*(x, t, \vep_1),$ for any $t.$
Then $-\log\mu(\Gamma_{\vep}(x))\geq -\log\mu(B_i^*(x, t, \vep_1))$ for any $t>0$. But this implies that
 $$\liminf\limits_{t\to \infty}\frac{-\log(\mu(B_i^*(x, t, \vep_1))}{t}\leq \liminf\limits_{t\to\infty }\frac{-\log(\mu(\Gamma_{\vep}(x))}{t}=0,$$
     contradicting the assumption $h_{\mu}(\phi)>0$, and therefore, $\mu$ must be rescaled expansive.
\end{proof}

\appendix

\section{The rescaled Brin-Katok local entropy formula for flows}

In addition to establishing a lower bound on the decay rate of the rescaled Bowen balls, we also obtain an upper bound under the integrability condition of $\ln\|X\|$. Furthermore, we derive a new formulation of the classical Brin–Katok local entropy formula \cite{BK} for flows generated by $C^1$ vector fields, using rescaled Bowen balls. This refined version offers a natural framework for analyzing local entropy in the continuous-time setting, which extends \cite{JCWZ} from the case without singularities  to the case with singularities.

%


As in the classical proof of the Brin–Katok local entropy formula, the Shannon–McMillan–Breiman theorem plays a crucial role in establishing the relationship between the decay rate of rescaled Bowen balls and the entropy. To apply this theorem, we construct a countable measurable partition with finite entropy, whose diameters under iteration can be controlled by the flow speed. The rescaled Bowen balls are then used to cover the partition elements, allowing a comparison between  the decay rate of rescaled Bowen balls and the entropy via the Shannon–McMillan–Breiman theorem. Following an approach inspired by Mañé~\cite{Mane}, we construct such a partition with finite entropy, where the size of its elements can be uniformly controlled by an integrable function. Similar ideas were used in \cite{WangWen}. 
\begin{theorem}[The rescaled Brin-Katok local entropy formula]\label{thm: BK-formula}
   Let  $\phi$ be a flow generated by a $C^1$ vector field on a compact manifold  $M$ with a Borel probability measure $\mu$.  Assume that $\ln\|X\|$ is $\mu$-integrable.    If $\mu$ is an ergodic $\phi$-invariant measure, then for $\mu$-almost every $x \in M$ we have 
    $$\lim_{\vep \to 0} \liminf_{t \to \infty} \frac{-\log(\mu(B_i^*(x, t, \vep)))}{t}= \lim_{\vep \to 0} \limsup_{t \to \infty} \frac{-\log(\mu(B_i^*(x, t, \vep)))}{t} = h_\mu(\phi),$$
    where $i=1,2$ or $3$. 
\end{theorem}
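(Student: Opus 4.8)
The plan is to prove Theorem~\ref{thm: BK-formula} by combining the lower bound already established in Theorem~\ref{11} with a matching upper bound, the latter being the genuinely new content here. By Lemma~\ref{rescaled123} and Lemma~\ref{relation}, all three quantities for $i=1,2,3$ agree (or are suitably comparable), so it suffices to work with one convenient choice, say $B_1^*$ for the upper bound and $B_2^*$ for the lower bound. Theorem~\ref{11} already gives
$$\lim_{\vep \to 0} \liminf_{t \to \infty} \frac{-\log(\mu(B_i^*(x, t, \vep)))}{t} \geq h_\mu(\phi),$$
so the entire task reduces to proving the reverse inequality for the $\limsup$:
$$\lim_{\vep \to 0} \limsup_{t \to \infty} \frac{-\log(\mu(B_1^*(x, t, \vep)))}{t} \leq h_\mu(\phi),$$
which then squeezes all four quantities to equal $h_\mu(\phi)$.

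For the upper bound, I would follow the Mañé-style construction alluded to in the paragraph preceding the theorem. First I would discretize time by fixing a small $\tau>0$ for which $\mu$ is ergodic under $\phi_\tau$. The key difficulty introduced by rescaling and by singularities is that the Bowen balls $B_1^*(x,t,\vep)$ have radius $\vep\|X(\phi_s(x))\|$ that shrinks as the orbit approaches $\sing(X)$; a single uniform partition cannot capture this. So I would construct a \emph{countable} measurable partition $\xi$ whose element containing a point $z$ has diameter controlled by a factor proportional to $\|X(z)\|$ — concretely, partitioning $M$ into regions where $\|X\|$ lies in dyadic bands and refining each band by a grid of mesh comparable to the lower bound of $\|X\|$ on that band. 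The integrability hypothesis $\ln\|X\|\in L^1(\mu)$ is exactly what guarantees this partition has \emph{finite} entropy $H_\mu(\xi)<\infty$, since the measure of the band where $\|X\|\sim 2^{-k}$ must decay fast enough against the $\sim k$ bits needed to index that band. This is where the integrability hypothesis, absent from Theorem~\ref{11}, becomes essential.

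Next I would show that for small enough $\vep$, each rescaled Bowen ball $B_1^*(x,n\tau,\vep)$ is contained in a bounded number of atoms of the refined partition $\xi_n = \xi \vee \phi_\tau^{-1}\xi \vee \cdots \vee \phi_\tau^{-(n-1)}\xi$. This is the reverse of the counting step in the proof of Theorem~\ref{11}: there one bounded the number of partition atoms meeting a Bowen ball, whereas now I want each Bowen ball to lie \emph{inside} a controlled union of atoms so that $\mu(B_1^*(x,n\tau,\vep))$ is bounded \emph{below} by the measure of the atom $\xi_n(x)$ up to a subexponential factor. The flow-speed control on the partition diameters ensures that if $y\in B_1^*(x,n\tau,\vep)$, then $\phi_{k\tau}(y)$ stays within the $\vep\|X(\phi_{k\tau}(x))\|$-neighborhood of $\phi_{k\tau}(x)$ for each $k<n$, and by Lemma~\ref{2} this neighborhood is comparable to $\|X(\phi_{k\tau}(y))\|$, hence lands in the same partition band and within a bounded number of grid cells as $\phi_{k\tau}(x)$. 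Consequently $B_1^*(x,n\tau,\vep)\supseteq$ a single atom of $\xi_n$ (up to the bounded multiplicative overlap), giving $\mu(B_1^*(x,n\tau,\vep)) \geq C^{-n}\mu(\xi_n(x))$ for a constant $C$ independent of $n$, where $C$ can be made close to $1$ as $\vep\to 0$.

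Applying the Shannon–McMillan–Breiman theorem to $\xi$ under $\phi_\tau$ yields $\frac{1}{n}\log\mu(\xi_n(x)) \to -H_\mu(\phi_\tau,\xi)$ for $\mu$-a.e.\ $x$, and the Kolmogorov–Sinai relation gives $\frac{1}{\tau}H_\mu(\phi_\tau,\xi)\to h_\mu(\phi)$ as the partition is refined and $\tau\to 0$. Combining with the containment estimate produces
$$\limsup_{n\to\infty}\frac{-\log\mu(B_1^*(x,n\tau,\vep))}{n\tau} \leq \frac{1}{\tau}H_\mu(\phi_\tau,\xi) + \frac{\log C}{\tau},$$
and interpolating from discrete times $n\tau$ to continuous $t$ via the monotonicity $B_1^*(x,(n+1)\tau,\vep)\subset B_1^*(x,t,\vep)\subset B_1^*(x,n\tau,\vep)$ (as in the final paragraph of Theorem~\ref{11}) transfers this to the continuous-time $\limsup$. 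Letting $\vep\to 0$ (so $C\to 1$), refining $\xi$, and sending $\tau\to 0$ gives the desired upper bound $h_\mu(\phi)$. I expect the main obstacle to be the construction of the finite-entropy partition with flow-speed-controlled diameters near the singular set and rigorously verifying that $\ln\|X\|\in L^1(\mu)$ forces $H_\mu(\xi)<\infty$; this is precisely the point where singularities, which force $\|X\|\to 0$ and hence arbitrarily fine local scales, make the argument genuinely harder than in the nonsingular case of \cite{JCWZ}.
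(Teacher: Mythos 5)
Your proposal is correct and follows essentially the same route as the paper: reduce to the upper bound for $B_1^*$ via Theorem~\ref{11} and Lemma~\ref{rescaled123}, construct a countable finite-entropy partition whose atom diameters are controlled by $\|X\|$ (the paper simply invokes Ma\~n\'e's Lemma~2, which is realized by exactly your dyadic-band construction and is where $\ln\|X\|\in L^1(\mu)$ enters), show that the atom $\xi_n(x)$ of the refined partition sits inside the rescaled Bowen ball, and apply Shannon--McMillan--Breiman. One caution: in your middle paragraph the logic briefly runs in the wrong direction --- covering the Bowen ball by boundedly many atoms bounds $\mu(B_1^*(x,n\tau,\vep))$ from \emph{above}, not below; the containment you actually need, $\xi_n(x)\subseteq B_1^*(x,n\tau,\vep)$, is obtained by starting from $y\in\xi_n(x)$, using ${\rm diam}\,\xi(\phi_{k\tau}(x))\le \eta\|X(\phi_{k\tau}(x))\|$ at each discrete time, and interpolating to continuous time with the Gronwall estimate after choosing $\eta<e^{-2L\tau}\vep$, which is precisely the paper's argument.
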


\begin{proof}By Lemma \ref{relation}, Lemma \ref{rescaled123} and Theorem \ref{11},  we only need to get the upper bound control of the decay rate of $B_1^*(x, t, \vep)$, i.e., 
$$ \lim_{\vep \to 0} \limsup_{t \to \infty} \frac{-\log(\mu(B_1^*(x, t, \vep)))}{t}\leq h_\mu(\phi).$$
First of all, when $\mu$ is ergodic for the flow, there exists $\tau$ such that $\mu$ is ergodic to the time $\tau$ map $\phi_{\tau}$. Without loss of generality, we assume that $\tau=1$. 
For any $x\in M$, $n\in \mathbb{N}$ and
$\varepsilon > 0$, we set
$$B(x,n,\vep,\phi_1)=\{y\in M: d(\phi_{i}(x) , \phi_{i}(y)) < \varepsilon \|X(\phi_i(x))\|, \,\,i\in [0,n)\cap \mathbb{N}\},$$
which is the rescaled $(n,\varepsilon, \phi_{1})$-Bowen ball at $x$ for the time $1$ map $\phi_{1}$.  Let $L$ be the Lipschitz number of the vector fields. For the given $\vep>0$, we can choose $0<\eta<e^{-2L}\vep$. For any regular point $x\in M$, if $d(x,y)<\eta\|X(x)\|$, we have
$$d(\phi_s(x),\phi_s(y))\leq e^{Ls}d(x,y)<e^{Ls}\eta\|X(x)\|\leq e^{2Ls}\eta\|X(\phi_s(x))\|<\vep\|X(\phi_s(x))\|$$
for any $s\in[0,1]$. For any $s\in[0,n]$  take $k\in\{0, 1, \cdots, n\}$ such that $k\leq s<(k+1)$,
then we have
$$d(\phi_s(x), \phi_s(y))=d(\phi_{s-k}(\phi_{k}(x)), \phi_{s-k}(\phi_{k}(y))<\vep\|X(\phi_{s-k}(\phi_{k}(x)))\|.$$
Thus we have $B(x,n,\eta,\phi_{1})\subset  B_1^*(x,n,\vep)$. 
By Lemma 2  in ~\cite{Mane}, for the integrable function $\ln \|X\|$, there exists a countably measurable partition $\xi$ of $M$ such that $H_{\mu}(\xi)<+\infty$ and ${\rm diam}(\xi(x))\leq \eta\ln\|X(x)\|$    for  $\mu \ a.e.x\in M,$ where $\xi(x)$ is the element of $\xi$ that contains $x$.
Let
$$\xi_{-n}=\xi\vee \phi_{1}^{-1}\xi\vee\cdots\vee\phi_{1}^{-n+1}\xi.$$ For $\mu$ $a.e. x\in M$, let $C_n(x)$ be the element of $\xi_{-n}$ that contains $x$. 

For any $y\in C_n(x)$, from definitions, we know
$\phi_{n}(y)\in\xi(\phi_{n}(x))$ and then 
$$d(\phi_{n}(x), \phi_{n}(y))\leq \eta\|X(\phi_{n}(x))\|.$$
It follows that
$C_{n}(x)\subset  B(x,n,\eta,\phi_{1})\subset  B_1^*(x,n,\vep)$.
Hence we have that 
$$\lim_{\vep \to 0} \limsup_{t \to \infty} \frac{-\log(\mu(B_1^*(x, t, \vep)))}{t}\leq  \lim_{n\to\infty}-\frac{1}{n}\ln \mu(C_n(x)).$$
Besides, by Shannon-McMillan-Brieman theorem, we know
$$\lim_{n\to\infty}-\frac{1}{n}\ln \mu(C_n(x))=h_{\mu}(\phi_{1},\xi)\leq h_{\mu}(\phi)\, \textrm{ $\mu$ $a.e.x\in M$}.$$
Hence we have 
 $$ \lim_{\vep \to 0} \limsup_{t \to \infty} \frac{-\log(\mu(B_1^*(x, t, \vep)))}{t}\leq h_\mu(\phi),\textrm{ $\mu$ $a.e.x\in M$},$$
 which completes the proof. 
\end{proof}

\end{document}